\newtheorem{lemma}{Lemma}[section]
\newtheorem{theorem}[lemma]{Theorem}
\newtheorem{prop}[lemma]{Proposition}
\newtheorem{claim*}{Claim}
\newtheorem{thm}[lemma]{Theorem}
\newtheorem{example}[lemma]{Example}
\theoremstyle{remark}
\newtheorem{remark}[lemma]{Remark}
\newcommand{\PP}{\mathbb{P}}
\newcommand{\GL}{{GL}}
\title[Blow-ups of $\mathbb{P}^{n-3}$ at $n$ points]{Blow-ups of $\mathbb{P}^{n-3}$ at $n$~points
 and spinor varieties}
\author{Bernd Sturmfels}
\author{Mauricio Velasco}
\address{Department of Mathematics, University of California,
	Berkeley, CA 94720, USA}
\email{bernd@math.berkeley.edu, velasco@math.berkeley.edu}
\urladdr{\ \ math.berkeley.edu/\~{}bernd, \ math.berkeley.edu/\~{}velasco}
\thanks{Bernd Sturmfels is partially supported by NSF grants DMS-0456960 and DMS-0757236. \\
Mauricio Velasco is partially supported by NSF grant DMS-0802851.}
\begin{document}

\begin{abstract}
Work of Dolgachev and  Castravet-Tevelev establishes a bijection between the 
$2^{n-1}$ weights of the half-spin representations of $\mathfrak{so}_{2n}$ and the generators of the Cox ring of the 
variety $X_n$ which is obtained by blowing up $\mathbb{P}^{n-3}$ at $n$ points.
We derive a geometric explanation for this bijection, by embedding
 ${\rm Cox}(X_n)$ into the even spinor variety (the homogeneous space of the even
half-spin representation). The Cox ring of the blow-up $X_n$ is recovered
geometrically by intersecting torus translates of the even spinor variety.
These are higher-dimensional generalizations of results by Derenthal and Serganova-Skorobogatov on del Pezzo surfaces.
\end{abstract}

\maketitle

\section{Introduction}
In the early '90s Batyrev observed that the well known equality between the number of exceptional curves on Del Pezzo surfaces of degree $2\leq \delta\leq 5$ and the dimension of certain minuscule representations of the semisimple groups of type $A_4,D_5,E_6$ and $E_7$ has a geometric
explanation. He conjectured that the universal torsor over any Del Pezzo surface admits an embedding into the homogeneous space defined by the orbit of the highest weight vector of the representation. Batyrev's conjecture was proved independently by Derenthal~\cite{Derenthal-Advances} and by Serganova and Skorobogatov~\cite{SS1}. 

For del Pezzo surfaces of degree five, the universal torsor and the corresponding homogeneous space (the Grassmannian ${\rm Gr}(2,5)$) coincide. This coincidence suggests that it should be possible
to recover the universal torsor from the corresponding homogeneous space. However, there is an obvious difficulty: Del Pezzo surfaces of degree $\delta$ form a family of dimension $10-2\delta$, while the homogeneous space is unique. A key insight of Serganova and Skorobogatov~\cite{SS2} is that 
the universal torsor is recovered by intersecting several torus translates of the corresponding homogeneous space. The chosen elements in the torus are determined by the moduli of the surface. 

In this paper we extend these constructions from del Pezzo surfaces to the higher-dimensional varieties $X_n$ obtained by blowing up $\mathbb{P}^{n-3}$ at $n\geq 5$ general points. Work of 
Dolgachev~\cite{DO} and  Castravet-Tevelev \cite{CT} 
ensures that there is bijection between the
 $2^{n-1}$ generators of the Cox ring of $X_n$ 
and the $2^{n-1}$ weights of the half-spin representations of $\mathfrak{so}_{2n}$.
We here offer a geometric explanation for this bijection:

\begin{theorem} 
\label{thm:main} The spectrum of the Cox ring of $X_{n}$ can be embedded into the spinor variety 
$S^{+}$ in $\bigwedge^{even}W$, where $W \simeq k^n$. 
If $I_X$ denotes the homogeneous prime ideal
in the polynomial ring $k\left[ \bigwedge^{even}W\right]$ 
that presents Cox ring of $X_n$, then we have
\begin{equation}
\label{inclusion}
I_X\,\,\supseteq \,\,\sum_{c\in \mathcal{G}(p)} a(c)\star I_{{\rm spin}}.
\end{equation}
\end{theorem}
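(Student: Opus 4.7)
The plan is to exhibit a graded $k$-algebra surjection $\phi: k\bigl[\bigwedge^{even} W\bigr] \twoheadrightarrow \Cox(X_n)$ whose kernel is $I_X$, and then to verify that every element of the right-hand side of \eqref{inclusion} lies in $\ker\phi$. By the Dolgachev/Castravet-Tevelev bijection recalled in the introduction, each even subset $I\subseteq \{1,\ldots,n\}$ indexes both a basis vector $e_I$ of $\bigwedge^{even}W$ and a canonical generator $f_I$ of $\Cox(X_n)$; we set $\phi(e_I) = f_I$. With this in place, the claimed containment \eqref{inclusion} becomes a purely algebraic identity to be checked generator-by-generator for the spinor quadrics.

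Next, I would fix explicit quadratic generators of $I_{{\rm spin}}$, namely the standard relations cutting out $S^{+}$ in $\PP\bigl(\bigwedge^{even} W\bigr)$. The torus $T = (k^*)^n$ acts on $\bigwedge^{even}W$ by $e_I \mapsto \bigl(\prod_{i\in I} t_i\bigr)\, e_I$, and under $\phi$ this action should correspond to the action of the Picard torus on $\Cox(X_n)$; the matching of weights with the $\NS(X_n)$-grading of the Cox ring is itself a small compatibility check. For each $c \in \mathcal{G}(p)$, the element $a(c) \in T$ is determined by the moduli of the $n$-point configuration $p$, and $a(c)\star I_{{\rm spin}}$ is obtained from $I_{{\rm spin}}$ simply by rescaling the coefficients of each spinor quadric by $a(c)$. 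Hence \eqref{inclusion} reduces to showing, for every $c$ and every quadratic spinor relation $q$, that the image $\phi(a(c)\cdot q) \in \Cox(X_n)$ vanishes.

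The main obstacle, and the bulk of the argument, is precisely this vanishing. The most promising route is to attach to each $c \in \mathcal{G}(p)$ a geometric construction on $X_n$---such as a linear projection, contraction, or blow-down encoded by the combinatorial datum $c$ and the points $p$---whose image is a classical variety (a Grassmannian or a smaller spinor variety) where the relevant quadratic relations are already known. Under the associated pullback, the $f_I$ appearing in a fixed spinor quadric should reduce to genuine Pl\"ucker or Pfaffian coordinates on that auxiliary variety, and the coefficient rescaling prescribed by $a(c)$ should exactly absorb the discrepancy introduced by the configuration of points. The untwisted summand $a(c)=1$, treated separately, already gives $I_{{\rm spin}}\subseteq I_X$ and therefore provides the embedding $\Spec\Cox(X_n)\hookrightarrow S^{+}$ asserted in the first half of the theorem; the remaining summands supply the additional relations needed to sharpen this embedding.
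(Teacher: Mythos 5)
Your overall strategy---define the presentation $\phi$, match the torus actions, and check the spinor quadrics generator-by-generator---is the right scaffolding, and you correctly observe that the untwisted case recovers the embedding into $S^{+}$. But the core of the theorem, the vanishing $\phi\bigl(a(c)\cdot q\bigr)=0$, is exactly what you leave as a sketch, and the geometric route you propose (projections, contractions, blow-downs encoded by $c$) is not only unproven but also not the mechanism that actually drives the result.

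The paper's proof hinges on two concrete facts you do not surface. First, the chosen Cox generators are not abstract sections: the map $\phi$ sends $f_B$ to the subpfaffian $T^{1-\frac12|B|}F_B(x,y,p)$ of an explicit skew-symmetric $n\times n$ matrix built from the Gale dual $p$ and an auxiliary generic point $y$. Because subpfaffians of a skew matrix automatically satisfy the quadratic Grassmann--Pl\"ucker (Wick) relations, the inclusion $I_{\rm spin}\subseteq I_X$ is immediate; the $T$-twist only multiplies each relation by a power of $T$. Your proposal instead takes $\phi(e_I)=f_I$ for ``a canonical generator $f_I$,'' but generators are defined only up to scalar, and the theorem is sensitive to that choice --- without fixing the Pfaffian normalization nothing forces the spinor quadrics to vanish. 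Second, the twisted containments are not handled by a new geometric construction: they follow from Okada's identity (Theorem~\ref{thm: identity}), which gives
\[
a(c)_B\,F_B(x,y,p)\;=\;\frac{\Psi_B(c,p)}{\Psi_B(y,p)}\cdot\frac{\Psi_B(x,p)\Psi_B(y,p)}{\prod_{i<j\in B}p_{ij}}\;=\;F_B(x,c,p).
\]
Thus rescaling by $a(c)$ simply replaces the auxiliary parameter $y$ by $c$, and the rescaled generators are again subpfaffians of a single skew-symmetric matrix, hence again satisfy the spinor quadrics. Your instinct that the rescaling ``should exactly absorb the discrepancy introduced by the configuration of points'' is pointing in the right direction, but without the Pfaffian realization of the generators and Okada's Pfaffian--determinant identity to make it precise, the argument has a genuine gap at its central step.
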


Here $I_{\rm spin}$ is the ideal defining the spinor variety $S^+$, the vector 
$a(c)$ has $2^{n-1}$ nonzero components which are explicit rational functions on 
a certain moduli space of point configurations, 
and $a(c) \star I_{\rm spin}$ denotes the ideal obtained from $I_{\rm spin}$ by scaling each variable 
in  $k\left[ \bigwedge^{even}W\right]$ by the corresponding entry in $a(c)$. We refer to Section~\ref{sec: final} for precise definitions.
We conjecture that equality holds in (\ref{inclusion}) for generic $X_n$, 
and that only two summands will suffice on the right hand side. 
This conjecture has been verified for $n \leq 8$ using computational
algebra methods (see Theorem~\ref{thm: generic}). 

All the ideals in (\ref{inclusion}) are generated by quadrics. Quadratic generation of the 
Cox ideal $I_X$ 
follows from the sagbi degenerations of Sturmfels-Xu~\cite{SX}, which relate the Cox rings of $X_n$ 
to the toric varieties studied by Buczy\'nska and Wi\'sniewski~\cite{BW}. These toric
degenerations represent statistical models for phylogenetic trees.
The spinor ideal $I_{\rm spin}$ is the prime ideal of all algebraic relations
among the $2^{n-1}$ subpfaffians of a skew-symmetric $n \times n$-matrix.
The quadratic generation of $I_{\rm spin}$ is a classical result from the literature
on algebras with straightening laws (cf.~De Concini-Procesi \cite{DP}), and we shall 
present the corresponding quadratic Gr\"obner basis in Section 6.

A main new idea in this paper is the construction
(in Section~\ref{sec: pfaffians}) of skew-symmetric
matrices whose subpfaffians generate the Cox ring of $X_n$.
These matrices enable us to extend the representation-theoretic approach of Serganova and Skorobogatov \cite{SS1, SS2} from del Pezzo surfaces to higher dimensions.
An important element in the proof
of Theorem \ref{thm:main} is a remarkable identity among pfaffians and determinants discovered by Okada~\cite{Okada} in connection with rectangular representations of the general linear group.
We shall review Okada's identity in Section~\ref{sec: identity}.
This furnishes the link between our Pfaffian generators for ${\rm Cox}(X_n)$
and  the determinantal generators given in \cite{CT}.

In Section \ref{sec:geometry} we start out with basic facts about the geometry of the 
blow-up varieties $X_n$ and their Cox rings, and we fix the notation and conventions used 
throughout this paper. In Section \ref{sec:trees} we present a result in combinatorial
commutative algebra that may be of independent interest: each phylogenetic tree 
specifies a degeneration of the Cox ring of $X_n$ to an algebra generated by Pl\"ucker monomials.
This refines the results on sagbi bases in \cite[\S 7]{SX}, and it opens up the possibility of 
relating our Cox ring to the subalgebras studied by Howard \textit{et al.}~\cite{HMSV} and
Manon~\cite{Man}. An important player in this connection should be the moduli space of rank two stable quasiparabolic bundles on $\PP^1$ with $n$ points (cf.~\cite[Theorem 7.2]{SX}).

 Another promising direction of inquiry would be to clarify
 the relationship between the remaining spaces $X_{a,b,c}$ studied by Castravet and Tevelev in~\cite{CT} and the homogeneous spaces of the fundamental representations of semisimple groups of type $T_{a,b,c}$.

\medskip

\noindent
{\bf Acknowledgements}. We thank Ana Maria Castravet, David Eisenbud, Vera Serganova, Damiano Testa and Anthony V\'arilly-Alvarado for helpful conversations.

\section{Geometry of blow-ups of $\mathbb{P}^{n-3}$ at $n$ points}\label{sec:geometry}

In this section we collect some facts about the geometry
and representation theory  relevant for
blow-ups of $\mathbb{P}^{n-3}$ at $n$ points and 
  their Cox rings. We also establish notation which will be used throughout the paper.
Let $k$ be an algebraically closed field.
For $n \geq 5$, let $X_{n}(Q)$ be the variety obtained by blowing up $\mathbb{P}^{n-3}$ at $n$ general points  $Q_1,\dots, Q_n$ and let $\pi: X_{n}(Q)\rightarrow \mathbb{P}^{n-3}$ be the canonical projection. 
The variety $X_n(Q)$ depends on the points $Q_1,\dots, Q_n$ up to projective equivalence. It follows that the moduli space of the varieties $X_n(Q)$ has dimension $n-3$. 

Let $\ell\subset \mathbb{P}^{n-3}$ be a hyperplane. The Picard classes $H,E_1,\dots, E_n$ with $H:=[\pi^{*}(\ell)]$ and $E_i=[\pi^{-1}(Q_i)]$ are a basis for ${\rm Pic}(X_{n}(Q))\cong \mathbb{Z}^{n+1}$. The canonical class is $\,K:=-(n-2)H+(n-4)\left(E_1+\dots +E_n\right)$. We endow ${\rm Pic}(X_n(Q))$ with a symmetric bilinear form via $H^2=n-4$, $E_iE_j=-\delta_{ij}$ and $HE_j=0$.
The set of classes in ${\rm Pic}(X_n(Q))$ which are orthogonal to $K$ and have square $-2$ form a root system of type $D_n$. A set of simple roots for this root system is given by
 $\{\alpha_1,\ldots, \alpha_n\}$, where
\[
\alpha_i \,\,\, =  \,\,\, \begin{cases}
\qquad E_i-E_{i+1} & \text{ if $1\leq i\leq n-1$,}\\
H-E_1-\dots -E_{n-2} & \text{ if $i=n$.}\\
\end{cases}
\]
The action of the Weyl group of this root system on the orthogonal complement of
the canonical divisor $K$ extends to an action on ${\rm Pic}(X_n(Q)) $ by fixing $K$. 

As shown in~\cite[Theorem 2]{DO}, the orbit of $E_n$ under the action of the Weyl group consists of $2^{n-1}$ classes of effective divisors which are exceptional on some small modification of $X_n(Q)$. The elements of this orbit are the {\em $(-1)$-divisors} on $X_n(Q)$.
The exceptional divisor $E_n$ is dual to the root $\alpha_{n-1}$ in the sense that $E_n\alpha_{j}=\delta_{j,n-1}$. As such, up to addition of a multiple of $K$, it coincides with the highest weight of a fundamental representation of the even orthogonal Lie algebra $\mathfrak{so}_{2n}$. As a consequence, the action of the Weyl group determines a bijection between the $(-1)$-divisors and the elements of the orbit of the highest weight of this representation. 

We now describe this bijection more explicitly. Fix a $2n$-dimensional vector space
$V$ with coordinates $x_1,\dots, x_n, y_1,\dots, y_n$.
Recall that $\mathfrak{so}_{2n}$ consists of the endomorphisms $A$
of $V$ which respect the quadratic form $Q(x,y)=\sum_{i=1}^n x_iy_i$, meaning that
 $Q(Av,w)+Q(v,Aw)\equiv 0$. Let $\mathfrak{h}\subset \mathfrak{so}_{2n}$ denote the subalgebra
 consisting of  all diagonal matrices $D={\rm diag}(d_1,\dots, d_n,-d_1,\dots, -d_n)$.
 We define a basis $L_1,\ldots,L_n$ of $\mathfrak{h}^*$ by the property
 $L_i(D)=d_i$, and we set $L_i\cdot L_j=-\delta_{ij}$,
 for all $1 \leq i,j \leq n$. 
 
 A system of simple roots of the Lie algebra $\mathfrak{so}_{2n}$ is given by
  $\{\beta_1,\ldots, \beta_n\}$, where
\[
\beta_i \,\,= \,\, \begin{cases}
\, L_{i}-L_{i+1} & \text{if $i \leq n-2$,} \\
\, L_{n-1}+L_n & \text{if $i=n-1$,}\\
\, L_{n-1}-L_n & \text{if $i=n$.}
\end{cases}
\]
The element $\omega_{n-1}=-\frac{1}{2}(\sum_{i=1}^{n}L_i)$ is dual to $\beta_{n-1}$.
This is the highest weight of the even spin representation $S^{+}$ (see~\cite[Lecture 20]{Fulton-Harris} for a construction of this representation). The underlying vector space of $S^{+}$ is $\bigwedge^{even}(W)$ where $W={\rm span}(f_1,\dots, f_n)$ and its weight vectors are the vectors $f_B=\wedge_{j\in B}f_j$ with weight $W(B):=\frac{1}{2}(\sum_{i\in B}L_i-\sum_{i\not \in B} L_i)$ parametrized by the even subsets $B\subseteq [n]$ (see~\cite[Lemma 20.15]{Fulton-Harris}). 

Next, we define a linear map $T: \mathfrak{h}^* \rightarrow K^{\perp}\subset {\rm Pic}(X_n(Q))$
by setting $T(\beta_i)=\alpha_i$. We use it to define a bijection between the 
weights of the even half-spin representation and the $(-1)$-divisors on $X_n$. Note that $T$ is an isometry so the identification below is compatible with the actions of the Weyl group of $D_n$ on $\mathfrak{h}^*$ and on ${\rm Pic}(X_n(Q))$.

\begin{lemma}\label{lem: bijection}  If $\,B\subset [n]\,$ with $\,|B|=2s\,$ then 
$\,T(W(B))=D(B)+\frac{1}{4}K$, where
\[D(B) \,\, = \,\, \, \begin{cases} \,\,\,\,
s\left(H-\sum_{i=1}^{n}E_i\right)\,+\,\sum_{b\in B\cup \{n\}}E_b & \text{if $n\not\in B$,}\\
(s-1)\left(H-\sum_{i=1}^{n}E_i\right)+\sum_{b\in B\setminus \{n\}}E_b & \text{if $n\in B$.}
\end{cases}
\]
\end{lemma}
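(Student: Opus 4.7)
The plan is to compute $T(L_i)$ explicitly for each $i$ and then apply linearity to evaluate $T$ on the weight $W(B) = \tfrac{1}{2}\sum_{i\in B} L_i - \tfrac{1}{2}\sum_{i\notin B} L_i$.

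First I would invert the change-of-basis expressing the $L_i$ in terms of the simple roots $\beta_j$. A short calculation gives $L_n = \tfrac{1}{2}(\beta_{n-1}-\beta_n)$, $L_{n-1} = \tfrac{1}{2}(\beta_{n-1}+\beta_n)$, and $L_i = \beta_i + \beta_{i+1} + \cdots + \beta_{n-2} + L_{n-1}$ for $i \leq n-2$. Applying $T$ (which by definition sends $\beta_j \mapsto \alpha_j$) and telescoping the consecutive differences in $\alpha_i = E_i - E_{i+1}$ produces the compact formulas
\[
T(L_i) \,=\, E_i + M \quad (1 \le i \le n-1), \qquad T(L_n) \,=\, -(E_n + M),
\]
where $M := \tfrac{1}{2}H - \tfrac{1}{2}(E_1 + \cdots + E_n)$. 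A quick sanity check using $H^2 = n-4$, $E_i \cdot E_j = -\delta_{ij}$, and $H \cdot E_j = 0$ shows that $M^2 = -1$, $E_i \cdot M = \tfrac{1}{2}$, and hence $T(L_i)\cdot T(L_j) = -\delta_{ij}$, consistent with the fact that $T$ is an isometry.

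Next I would substitute these formulas into $W(B)$ and split into the two cases $n \notin B$ and $n \in B$. In each case the result is a linear combination of $H$ and the $E_i$ whose coefficients depend only on $s = |B|/2$, on $n$, and on whether the index $i$ lies in $B \cup \{n\}$ (case~1) respectively $B \setminus \{n\}$ (case~2). A direct comparison with the coefficients in
\[
D(B) + \tfrac{1}{4}K \,=\, D(B) - \tfrac{n-2}{4}H + \tfrac{n-4}{4}\bigl(E_1 + \cdots + E_n\bigr)
\]
then yields the formula on a case-by-case basis.

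The only real subtlety is the sign flip $T(L_n) = -(E_n+M)$: this is the origin of the asymmetry between the two cases in the definition of $D(B)$, because whether the index $n$ belongs to $B$ controls whether this sign appears in the sum or its complement, and the shift from $s$ to $s-1$ in the formula for $D(B)$ tracks exactly this swap. Beyond accounting for that sign, the remainder of the argument is a routine bookkeeping exercise in collecting coefficients of $H$ and of each $E_i$.
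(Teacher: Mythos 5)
Your proposal is correct and follows essentially the same route as the paper: the paper also reduces to the two formulas $T(L_i) = E_i + \tfrac{1}{2}\Delta$ for $i \le n-1$ and $T(L_n) = -E_n - \tfrac{1}{2}\Delta$ (your $M$ is the paper's $\tfrac{1}{2}\Delta$ with $\Delta := H - \sum E_i$) and then splits into the two cases on whether $n \in B$. The only cosmetic difference is that you derive those formulas by explicitly inverting the change of basis $\beta_j \leftrightarrow L_i$ and telescoping, whereas the paper simply records them without derivation; the remaining case-by-case bookkeeping is the same in both.
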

\begin{proof} Let $\Delta:=H-\sum_{i=1}^nE_i$ and note that $T(L_n)=-E_n-\frac{1}{2}\Delta$ and that $T(L_i)=E_i+\frac{1}{2}\Delta$ for $1\leq i\leq n-1$. To show the above equality we study two cases depending on whether or not the set $ B$ contains the index $n$.
If $n\in B$ then $T(W(B))$ equals
\[\frac{1}{2}\! \left(\sum_{b\in B\setminus \{n\}} \!\! (E_b+\frac{1}{2}\Delta) - \!\!
\!\! \sum_{b\in B^{c}\cup \{n\}} \!\!\!\! (E_b+\frac{1}{2}\Delta) \right)= 
\frac{1}{2} \!\left(\frac{4s-n-2}{2}\Delta + \!\!\! \sum_{b\in B\setminus \{n\}}
\!\! E_b- \!\!\!\!\! \sum_{b\in B^{c}\cup \{n\}}\!\! E_b\right)\! .\]
Subtracting $D(B)$ from this expression, we obtain
\[\frac{1}{2}\left(\frac{(4s-n-2-4(s-1))}{2}\Delta \,\,\, -
\! \sum_{b\in B\setminus\{n\}}\!\! E_b \,- \! \sum_{b\in B^{c}\cup \{n\}} \!\! E_b \right)
\,\,= \,\, \frac{K}{4}.\]
Similarly, if $n\not\in B$ then $T(W(B))$ equals
\[\frac{1}{2} \! \left(\sum_{b\in B\cup \{n\}}\!\!\! (E_b+\frac{1}{2}\Delta)- \!\!\! \!
\sum_{b\in B^{c}\setminus \{n\}} \!\! (E_b+\frac{1}{2}\Delta) \right)
= \frac{1}{2} \! \left(\! \frac{4s-n+2}{2}\Delta +
\!\!\! \sum_{b\in B\cup \{n\}} \!\! E_b- \!\!\!\! \sum_{b\in B^{c}\setminus \{n\}} \!\!\! E_b\right)\!.\]
If we subtract $D(B)$ from this expression then we obtain $\frac{1}{4}K$ as claimed.
\end{proof}

We next review the definition of the Cox ring.
Let $X$ be any smooth projective variety with 
${\rm Pic}(X)\cong \mathbb{Z}^{n+1}$ and $D_0,\ldots, D_n$ a collection of divisors whose classes form a basis for ${\rm Pic}(X)$. Then the Cox ring of $X$ is the ${\rm Pic}(X)$-graded algebra 
\begin{equation}
\label{eq:cox}
{\rm Cox}(X) \,\,\,
=\bigoplus_{(m_0,\ldots, m_n)}{\rm H}^0\bigl(X,\mathcal{O}_X(m_0D_1+\cdots +m_nD_n)\bigr).
\end{equation}
For $X=X_n(Q)$ we fix divisors $h,e_1,\dots, e_n$ in the classes $H,E_1,\dots, E_n$. 
The Cox ring of $X_n(Q)$ is realized as the subalgebra of $k[x_1,\dots, x_{n-2}][t_0^{\pm},\dots, t_{n}^{\pm}]$ given by  \[\bigoplus_{(m_0,m_1,\dots, m_n)\in \mathbb{Z}^{n+1}} 
\!\! \Gamma(m_0 H+m_1E_1+\cdots+m_n E_n)\big)\cdot t_0^{m_0} t_1^{m_1} \cdots t_n^{m_n}\]
Here $x_1,\ldots,x_{n-2}$ are  coordinates on $\mathbb{P}^{n-3}$ and
$\Gamma(m_0 H-m_1E_1-\cdots -m_nE_n)$ is the vector space consisting of homogeneous polynomials of total degree $m_0$ in the $x_i$ that vanish with multiplicity at least $m_i$ at the point $Q_i$.
 For an $(n+1)$-tuple $t=(t_0,t_1,\dots, t_n)$ and $D=m_0H+m_1E_1+\cdots +m_nE_n$ we define $t^D:=t_0^{m_0}t_1^{m_1}\cdots t_n^{m_n}$.

Castravet and Tevelev~\cite{CT} showed that the Cox ring of $X_n(Q)$ is generated
 as $k$-algebra  by any $2^{n-1}$ nonzero global sections supported on the $(-1)$-divisors. Any choice of such sections determines a presentation of the cox ring as a quotient of a polynomial ring by an ideal of relations. As shown by Stillman, Testa and Velasco for Del Pezzo surfaces~\cite{STV}, and by Sturmfels and Xu~\cite{SX} in general, these ideals of relations admit quadratic Gr\"obner bases and in particular are generated by quadrics. 

\section{An identity involving pfaffians and determinants} \label{sec: identity}
In this section we present a combinatorial identity discovered by Okada~\cite{Okada} which plays a central role in our approach.
  We work in the polynomial ring over $k$ with variables $X_i,Y_i,P_i,x_i,y_i,p_i$ for $1\leq i\leq n$. For $i,j\in [n]$ let $p_{ij}:=P_ip_j-P_jp_i$ and define $x_{ij}$ and $y_{ij}$ similarly.  
For an even index set $B=\{b_1<b_2<\dots< b_{2s}\}\subset [n]$,
 let $V_B(x,X,p,P)$, or $V_B(x,p)$ for brevity, be the $2s\times 2s$ matrix whose $m$-th row is  
\[
\bigl(\,
x_{{b_m}} p_{b_m}^{s-1}, \,
x_{{b_m}} p_{b_m}^{s-2}P_{b_m}, \, \ldots ,\,
 x_{{b_m}}P_{b_m}^{s-1} \,,\,
 X_{{b_m}} p_{b_m}^{s-1}, 
 \,X_{{b_m}} p_{b_m}^{s-2}P_{b_m}, \,\ldots , \,X_{{b_m}}P_{b_m}^{s-1} \,\bigr),
\]
and let $\Psi_B(X,x,P,p)$ (or $\Psi_B(x,p)$) denote its determinant. Note that $\Psi_{\emptyset}(x,p)=1$.

Let $\mathcal{A}(x,y,p)$ denote the skew-symmetric $n\times n$ matrix with off-diagonal entries
\[
\mathcal{A}(x,y,p)_{ij} \,\,\, = \,\,\, \frac{x_{ij}y_{ij}}{p_{ij}}
\,\,\, = \,\,\,\, \begin{vmatrix}
X_i & X_j\\
x_i & x_j\\
\end{vmatrix} \cdot
\begin{vmatrix}
Y_i & Y_j\\
y_i & y_j \\
\end{vmatrix} \cdot
\begin{vmatrix}
P_i & P_j\\
p_i & p_j\\
\end{vmatrix}^{-1}
\]
We write $\mathcal{A}_B(x,y,p)$ for the submatrix of $\mathcal{A}(x,y,p)$ obtained by choosing the rows and columns indexed by the elements of $B$. The matrix $\mathcal{A}_B(x,y,p)$ is a $2s\times 2s$ skew-symmetric matrix, and we denote its pfaffian by $F_B(x,y,p)$. The following identity, due to Okada~\cite{Okada}, relates the pfaffians and the determinants defined above.

\begin{theorem} \label{thm: identity} For any even subset $B=\{b_1<\dots <b_{2s}\}$ of $[n]$,
we have
\begin{equation}
\label{OkadaTwo} F_B(x,y,p) \,\,\, = \,\,\, \frac{\Psi_B(x,p)\Psi_B(y,p)}{\prod_{i<j\in B}p_{ij}}.
\end{equation}
In particular, if $1$ denotes the vector $(1,\dots, 1)$ of length  $n$, then this specializes to
\begin{equation}
 \label{OkadaOne} {\rm pfaff}\left(\frac{(X_{b_i}-X_{b_j})(Y_{b_i}-Y_{b_j})}{(P_{b_i}-P_{b_j})}\right)
 \,\, = \,\, \frac{\Psi_B(X,1,P,1)\Psi_B(Y,1,P,1)}{{\prod_{b_i<b_j}(P_{b_i}-P_{b_j})}}.
 \end{equation}
\end{theorem}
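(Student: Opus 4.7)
The plan is to reduce (\ref{OkadaTwo}) to the affine version (\ref{OkadaOne}) by multi-homogeneity, and then to prove (\ref{OkadaOne}) by induction on $s=|B|/2$. For the reduction, each of the pairs $(X_i, x_i)$, $(Y_i, y_i)$, $(P_i, p_i)$ serves as homogeneous coordinates on a copy of $\mathbb{P}^1$, and a direct inspection shows that both sides of (\ref{OkadaTwo}) transform with the same weights under rescaling each pair. Specializing $x_i = y_i = p_i = 1$ for all $i \in B$ and relabelling therefore reduces (\ref{OkadaTwo}) to (\ref{OkadaOne}).

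To prove (\ref{OkadaOne}), first clear denominators to obtain the polynomial identity
\[
{\rm pfaff}\!\left(\frac{(X_{b_i}-X_{b_j})(Y_{b_i}-Y_{b_j})}{P_{b_i}-P_{b_j}}\right) \cdot \prod_{b_i<b_j}\!(P_{b_i}-P_{b_j}) \,\,=\,\, \Psi_B(X,1,P,1)\cdot \Psi_B(Y,1,P,1),
\]
in the $6s$ indeterminates $\{X_{b_i}, Y_{b_i}, P_{b_i}\}_{i=1,\ldots,2s}$. A tridegree count (degree $2s$ in the $X$-variables, $2s$ in the $Y$-variables, and $2s(2s-2)$ in the $P$-variables) confirms that both sides live in the same graded piece. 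The base case $s=1$ is a $2\times 2$ verification: both sides reduce to $(X_{b_1}-X_{b_2})(Y_{b_1}-Y_{b_2})$ up to sign. For the inductive step, I would expand the pfaffian along its first row,
\[
{\rm pfaff}(\mathcal{A}_B) \,=\, \sum_{j=2}^{2s}(-1)^{j}\,\mathcal{A}_{b_1,b_j}\cdot {\rm pfaff}(\mathcal{A}_{B\setminus\{b_1,b_j\}}),
\]
apply the inductive hypothesis to each sub-pfaffian, and then match the resulting expression with the bilinear Laplace expansion of $\Psi_B(X,1,P,1)\cdot\Psi_B(Y,1,P,1)$ along the $s$ columns involving the $X_{b_m}$-entries (and symmetrically the $Y_{b_m}$-entries).

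The main obstacle will be the sign bookkeeping in this matching: the pfaffian is indexed by perfect matchings of $B$, while the product of determinants is naturally indexed by pairs of ordered column selections, and reconciling the two requires the ``bi-Vandermonde'' shape of the rows of $V_B$ to be exploited carefully. A clean way to organize the combinatorics, essentially the path chosen by Okada in \cite{Okada}, is via the Lindstr\"om--Gessel--Viennot lemma applied to non-intersecting lattice paths on a suitable grid: this yields a positive combinatorial model for $\Psi_B(X,1,P,1)$, and the pfaffian expansion on the left-hand side can be matched to an involution on these path systems. An alternative route would be to seek a direct Cauchy--Binet-style factorization $\mathcal{A}_B = V\cdot J\cdot V^T/\text{denom}$, with $V$ the matrix defining $\Psi_B$ and $J$ a skew form encoding the $Y$-data, and then to invoke ${\rm pfaff}(V J V^T)=\det(V)\cdot{\rm pfaff}(J)$; but producing such $V,J$ explicitly is essentially equivalent to proving the identity, so I expect the inductive/combinatorial route to be the cleanest.
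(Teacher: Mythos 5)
Your reduction of (\ref{OkadaTwo}) to (\ref{OkadaOne}) by multi-homogeneity is in essence the argument the paper gives: the authors explicitly factor $\mathcal{A}_B(x,y,p) = D^t T_B D$ with $D$ diagonal and $T_B$ the ``dehomogenized'' matrix whose entries involve the ratios $X_i/x_i$, $Y_i/y_i$, $P_i/p_i$, and then invoke ${\rm pfaff}(D^t T D) = \det(D)\,{\rm pfaff}(T)$; that is exactly the bookkeeping behind the rescaling argument you sketch. (One small slip: your tridegree count is off by a factor of two. After clearing denominators, each side has degree $s$, not $2s$, in the $X$-variables and in the $Y$-variables, and degree $s(2s-2)$, not $2s(2s-2)$, in the $P$-variables.)

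Where you part company with the paper is on (\ref{OkadaOne}) itself. The paper proves nothing there: it cites (\ref{OkadaOne}) as the $m=n$ case of Theorem 4.7 of \cite{Okada} and stops. Your plan to reprove (\ref{OkadaOne}) by induction on $s$ via a first-row pfaffian expansion is therefore extra work, and as sketched it has a genuine gap that is not merely sign bookkeeping. By the inductive hypothesis, each subpfaffian ${\rm pfaff}(\mathcal{A}_{B\setminus\{b_1,b_j\}})$ is governed by a determinant $\Psi_{B'}(X,1,P,1)$ of a $(2s-2)\times(2s-2)$ matrix whose rows carry the monomials $1, P, \ldots, P^{s-2}$, $X, XP, \ldots, XP^{s-2}$; but the Laplace (complementary-minor) expansion of $\Psi_B(X,1,P,1)$, whose rows carry $1, P, \ldots, P^{s-1}$, $X, XP, \ldots, XP^{s-1}$, does not produce those smaller $\Psi_{B'}$'s as factors, because the column structure of the confluent-Vandermonde-type matrix $V_B$ changes with $s$. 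Bridging this would require an actual column-reduction or divided-difference identity, which you do not supply; your alternative routes (Lindstr\"om--Gessel--Viennot, or a minor-summation/Cauchy--Binet-type factorization) are closer to Okada's actual method, but none of them is carried out here. If you simply cite \cite{Okada} for (\ref{OkadaOne}), as the paper does, the rest of your argument is sound and matches the paper's proof.
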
 

\begin{proof} Equation \eqref{OkadaOne} is the special case $n=m$ of Theorem 4.7 in~\cite{Okada}. 
We can derive \eqref{OkadaTwo} from \eqref{OkadaOne} by a homogenization argument as follows.
Let $T_B$ denote the skew-symmetric $2s \times 2s$ matrix whose off-diagonal entries are given by
\[(T_B)_{ij} \,\,\,= \,\, \left(\frac{X_{b_i}}{x_{b_i}}-\frac{X_{b_j}}{x_{b_j}}\right) \cdot
\left(\frac{Y_{b_i}}{y_{b_i}}-\frac{Y_{b_j}}{y_{b_j}}\right) \cdot
 \left(\frac{P_{b_i}}{p_{b_i}}-\frac{P_{b_j}}{p_{b_j}}\right)^{\! -1} .\]
Let $D$ be the diagonal matrix with $D_i= x_{b_i}y_{b_i} p_{b_i}^{-1}$. Then $\mathcal{A}_B(x,y,p)=D^tT_BD$, and the left hand side of~\eqref{OkadaTwo} equals 
$\,\det(D) \cdot {\rm pfaff}(T)$. Using the identity~\eqref{OkadaOne} we obtain
\[F_B(x,y,p) \,\, = \,\,\,\det(D) \cdot \frac{\det V_B(\frac{P}{p},1,\frac{X}{x},1 )
\cdot \det V_B(\frac{P}{p},1,\frac{Y}{y},1)}{\prod_{i<j}\left(\frac{P_{b_i}}{p_{b_i}}-\frac{P_{b_j}}{p_{b_j}}\right)} . \]
In the left factor of the numerator we now substitute the expression
\[
\det V_B\left(\frac{X}{x},1,\frac{P}{p},1\right) \,\,\,=
\,\,\, \Psi_B(x,p) \cdot \left(\prod_{j=1}^{2s}x_{b_j}p_{b_j}^{s-1}\right),
\] 
and similarly for the right factor.
After some cancellations, identity~\eqref{OkadaTwo} emerges.
\end{proof}

\begin{remark} An irreducible representation of $\GL(n,\mathbb{C})$ is {\em rectangular} if the corresponding partition has parts of equal size. Okada found the above identity
 in connection with tensor products of rectangular representations. When $n=2m$ and the rectangular partition has exactly $m$ parts of length $s$, the character of the representation is the 
 Schur function $\frac{\Psi(X,1,X^{s+m},1)}{\Delta}$ where $\Delta$ is a Vandermonde determinant. 
 The identity~\eqref{OkadaOne} expresses the character of the tensor product of two such rectangular representations as a Pfaffian. The minor summation formula then can be used to find
 the decomposition of this tensor product into irreducibles \cite[Theorem 2.4]{Okada}.
\end{remark}

\section{Pfaffian generators for the Cox ring}\label{sec: pfaffians}
The points $Q_1,\dots, Q_n$ are assumed to be in linearly general position in $\mathbb{P}^{n-3}$.
We can thus choose coordinates $\,x_1,x_2,\ldots, x_{n-2}\,$ so that $\,Q_1,\dots, Q_{n-2}\,$ are the canonical basis vectors, $\,Q_{n-1}=[1:1:\dots:1]$, and $\,Q_n=[p_1:p_2:\dots:p_{n-2}]$ 
for some $p_i\in k$. 

We now show that, in the chosen coordinates, the $2^{n-1}$ hypersurfaces  whose strict transforms yield the $(-1)$-divisors are defined by the subpfaffians of an $n\times n$ skew-symmetric matrix. These defining equations are unique only up to scalar multiplication. To specify the scalars we use an additional parameter $y\in \mathbb{A}^{n-2}_k$ not lying in any of the hypersurfaces. 
Let $M$ denote the skew-symmetric $n \times n$ matrix 
\[ M \,\, =  \,\, \begin{pmatrix}
\,0 & \frac{(x_2-x_1)(y_2-y_1)}{(p_2-p_1)} & \frac{(x_3-x_1)(y_3-y_1)}{(p_3-p_1)} & \cdots & \frac{(x_{n-2}-x_1)(y_{n-2}-y_1)}{(p_{n-2}-p_1)} & -\frac{x_1y_1}{p_1} &1\\
\,\vdots & 0 & \frac{(x_3-x_2)(y_3-y_2)}{(p_3-p_2)} & \cdots & \frac{(x_{n-2}-x_2)(y_{n-2}-y_2)}{(p_{n-2}-p_{2})} & -\frac{x_2y_2}{p_2} &1\\ \,
\vdots & \vdots & 0 & \cdots & \frac{(x_{n-2}-x_3)(y_{n-2}-y_3)}{(p_{n-2}-p_3)} & -\frac{x_3y_3}{p_3} &1\\
\,\vdots & \vdots & \vdots & \cdots & \vdots & \vdots\\
-1 & -1 & -1 &\cdots & -1 & -1 & 0\\
\end{pmatrix}.
\] 
For $B\subset [n]$ let $M_B$ be the square submatrix with 
rows and columns indexed by $B$.

\begin{lemma}\label{lem: hypersurfaces} Let $B\subset [n]$ be an even subset. The pfaffian ${\rm pfaff}(M_B)$ is a nonzero element of $\Gamma(D)$ where $D$ is the $(-1)$-divisor corresponding to $B$ as in Lemma~\ref{lem: bijection}.
\end{lemma}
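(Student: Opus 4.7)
My plan is to verify three properties of $\text{pfaff}(M_B)$, viewed as a polynomial in the homogeneous coordinates $x_1,\ldots,x_{n-2}$: (a) it is nonzero; (b) it has total $x$-degree $m_0$; and (c) it vanishes at each point $Q_i$ to multiplicity at least $m_i$. Here I write $D(B)=m_0H-\sum_{i=1}^n m_i E_i$ with the coefficients $m_k$ read off Lemma~\ref{lem: bijection}. Since a $(-1)$-divisor on $X_n(Q)$ has a one-dimensional space of global sections, any polynomial with these three properties is forced to be a nonzero generator of $\Gamma(D(B))$.

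The first step is to compute $\text{pfaff}(M_B)$ via Okada's identity. When $B\subseteq[n-2]$, the submatrix $M_B$ is obtained from the matrix $\mathcal{A}_B$ of Section~\ref{sec: identity} by the substitutions $(X_i,x_i,Y_i,y_i,P_i,p_i)=(x_i,1,y_i,1,p_i,1)$, and Theorem~\ref{thm: identity} yields
$$\text{pfaff}(M_B)\,\,=\,\,\frac{\Psi_B(x,p)\,\Psi_B(y,p)}{\prod_{i<j\in B}(p_i-p_j)}.$$
Because $y$ and $p$ are fixed generic scalars, $\Psi_B(y,p)$ is a nonzero constant and $\text{pfaff}(M_B)$ is a scalar multiple of $\Psi_B(x,p)$. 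For the three remaining cases (where $n-1\in B$ and/or $n\in B$) I would expand the pfaffian along the special rows: the $n$-th row has only the constants $\pm1$, while the $(n-1)$-th row has the linear entries $\pm x_iy_i/p_i$. Each expansion reduces the pfaffian to a signed sum of Case~1 pfaffians on smaller index sets, which after column operations assembles into $\Psi$-type determinants with one or two degenerate rows; equivalently, these closed forms arise from Okada's identity by sending a pair $(P_k,p_k)$ or $(X_k,x_k)$ to $[1{:}0]\in\mathbb{P}^1$.

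Property (b) then falls out of the block structure of $M$: entries with both indices in $[n-2]$, and entries involving the index $n-1$, are linear in $x$, whereas entries involving $n$ are constant. Each term of $\text{pfaff}(M_B)$ is a signed product of $s=|B|/2$ entries along a perfect matching on $B$, so the total $x$-degree equals $s$ when $n\notin B$ and $s-1$ when $n\in B$, matching $m_0$ in both branches of Lemma~\ref{lem: bijection}.

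Property (c) is the heart of the argument and is where I expect the main bookkeeping. The polynomial $\Psi_B(x,p)$ is multilinear in the variables $x_{b_m}$ with $b_m\in B$ and independent of every other $x_k$. At the coordinate point $Q_i=[0{:}\cdots{:}1{:}\cdots{:}0]$ with $i\in[n-2]$, this multilinearity directly yields multiplicity $s$ when $i\notin B$ (no $x_i$ appears, so all $s$ linear factors must vanish) and multiplicity $s-1$ when $i\in B$ (one linear factor $x_i$ now survives). For the distinguished points $Q_{n-1}=[1{:}\cdots{:}1]$ and $Q_n=[p_1{:}\cdots{:}p_{n-2}]$ I would substitute $x_k=1+\epsilon_k$ and $x_k=p_k+\epsilon_k$, respectively, into the matrix defining $\Psi_B$, then use column operations that expose every Leibniz term as a product carrying a fixed number of $\epsilon$-factors: exactly $s$ in the first case (matching $m_{n-1}=s$) and exactly $s-1$ in the second (matching $m_n=s-1$). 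In each case the leading coefficient is a generalized Vandermonde determinant in the $p_{b_m}$, which is nonzero for generic $p$; this simultaneously establishes the correct multiplicities and claim (a). The three sub-cases involving $n-1\in B$ or $n\in B$ require the same analysis applied to the degenerate pfaffians produced in the first step. The main obstacle is correlating the four sub-cases of $B$ with the two-branch formula of Lemma~\ref{lem: bijection} while tracking the signs arising in the expansions along the special rows.
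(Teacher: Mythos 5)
Your plan has the right raw ingredients, but it defers the hardest bookkeeping and it misses two substantial simplifications that the paper builds into the setup. First, the projective specialization $(X_i,x_i)\mapsto(1,x_i)$ for $i\le n-2$, $(X_{n-1},x_{n-1})\mapsto(1,0)$, $(X_n,x_n)\mapsto(0,1)$ (and similarly for $(Y,y)$ and $(P,p)$) identifies the \emph{whole} matrix $M$ with $\mathcal{A}(x,y,p)$ from Section~\ref{sec: identity}, so Okada's identity gives ${\rm pfaff}(M_B)=F_B(x,y,p)=\Psi_B(x,p)\Psi_B(y,p)/\prod_{i<j\in B}p_{ij}$ \emph{uniformly} for every even $B$. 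The four-way case split on whether $n-1$ or $n$ belongs to $B$, and the degenerate-row expansions you propose for those cases, are unnecessary; they are exactly what the projective coordinates $(X_i,x_i)$ etc.\ were introduced to avoid. Second, the paper does not re-derive the degree or the multiplicities at $Q_1,\dots,Q_{n-1}$ at all: each entry $M_{ij}$ already lies in a known graded piece of the Cox ring (namely $\Gamma((H-\sum_{l<n}E_l)+E_i+E_j)$ for $j\neq n$ and $\Gamma(E_i)$ for $j=n$), so every term in the matching expansion of ${\rm pfaff}(M_B)$ automatically lands in $\Gamma(D(B)+(s-1)E_n)$. That collapses all of your item (c) to a single remaining claim, the extra vanishing of order $s-1$ at $Q_n$, which the paper settles by an exterior-algebra observation: the skew form $\alpha=M|_{x=p}$ has rank two, so $(\alpha+\epsilon\beta)^{(s)}$ is divisible by $\epsilon^{s-1}$. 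Your plan to substitute $x_k=p_k+\epsilon_k$ into $V_B$ and argue via column rank (at $x=p$ the $2s$ columns of $V_B$ collapse to $s+1$ distinct power columns) is a viable parallel idea, but as written it is only gestured at: the column operations are not performed, the claim that the leading coefficient is a nonzero ``generalized Vandermonde'' is asserted rather than proved, the companion estimates at $Q_{n-1}$ and at coordinate points $Q_i$ with $i\in B$ are left informal (``one linear factor $x_i$ survives'' is not an argument), and the three sub-cases with $n-1$ or $n$ in $B$ are explicitly postponed. So the route could be made to work, but what you have is an outline of a considerably longer proof than the paper's, not a proof.
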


\begin{proof} Suppose $|B|=2s$. For  $1\leq i<j\leq n$ we have 
\[M_{ij} \,\in \,
\begin{cases}
\Gamma\bigl((H-\sum_{l=1}^{n-1} E_l)+E_i+E_j \bigr) & \text{if $j\neq n$,} \\
\qquad \qquad \Gamma(E_i) & \text{if $j=n$.}
\end{cases}
\]
The pfaffian of the $2s \times 2s$-submatrix $M_B$ has the expansion
\[{\rm pfaff}(M_B) \,\,\, = \,\,\, \sum_{\mu} {\rm sign}(\sigma(\mu))\prod_{(a,b) \in \mu} \!\! M_{ab}\]
where $\mu$ runs over all matchings of $B$. All terms on the right hand side belong to 
$\Gamma \bigl((s-\delta_B)(H \! - \! \sum_{l=1}^{n-1} E_l) +\sum_{b\in B\setminus \{n\}} \! E_b \bigr)$ 
where $\delta_{B}=1$ if $n \in B$ and $\delta_{B}=0$  otherwise.

We now show that the pfaffian of $M_B$ vanishes to order at least $s-1$ at the point $Q_n$. When $s=0$ the statement is trivial. For $s>0$ we show that, for any $(v_1,\dots, v_{n-2})$, the pfaffian evaluated at $x_i=p_i+\epsilon v_i$ is divisible by $\epsilon^{s-1}$. Consider the $n {\times} n$ matrix
\[\alpha \,\,\, = \,\,\,
\begin{pmatrix}
0 & y_2-y_1 & \cdots & \cdots & y_{n-2}-y_1 & -y_1 & 1\\
y_1-y_2 & 0 & y_3-y_2 & \cdots & y_{n-2}-y_2 & -y_2 & 1\\
y_1-y_3 & y_2-y_3 & 0 & \cdots & y_{n-2} -y_3 & -y_3 & 1 \\
\vdots & \vdots & \vdots & \ddots & \vdots & \vdots & \vdots\\
y_1-y_{n-2} & y_2-y_{n-2} &  y_3-y_{n-2}   & \cdots &   0 & -y_{n-2} & 1\\
y_1 & y_2 & y_3 & \cdots & y_{n-2} & 0 & 1 \\
-1 & -1 & -1 & \cdots & -1 & -1 & 0\\
\end{pmatrix}
\]
We regard $\alpha$ as an exterior tensor of step $2$.
Then its $m$-th exterior power $\alpha^{(m)}$ is zero for $m \geq 2$ because
the coordinates $\alpha_{ij}$ of $\alpha$  are the $2\times 2$ minors of the matrix 
\[
\begin{pmatrix}
    1 & 1  & \cdots & 1 & 1 & 0\\
y_1 & y_2 & \cdots &  y_{n-2} & 0 & 1\\
\end{pmatrix}
.\]
We also consider the following skew-symmetric matrix an an exterior tensor of step~$2$:
\[
\beta \,\, = \,\, \begin{pmatrix}
\, 0 & \frac{(v_2-v_1)(y_2-y_1)}{(p_2-p_1)} & \frac{(v_3-v_1)(y_3-y_1)}{(p_3-p_1)} & \cdots & \frac{(v_{n-2}-v_1)(y_{n-2}-y_1)}{(p_{n-2}-p_1)} & -\frac{v_1y_1}{p_1} & 0 \,\\
\,\vdots & 0 & \frac{(v_3-v_2)(y_3-y_2)}{(p_3-p_2)} & \dots & \frac{(v_{n-2}-v_2)(y_{n-2}-y_2)}{(p_{n-2}-p_{2})} & -\frac{v_2y_2}{p_2} & 0 \, \\ \,
\vdots & \vdots & 0 & \ddots & \frac{(v_{n-2}-v_3)(y_{n-2}-y_3)}{(p_{n-2}-p_3)} & -\frac{v_3y_3}{p_3}
 & 0 \, \\ \vdots & \vdots & \vdots & \cdots & \vdots & \vdots & \vdots \,\\ \,
0 & 0 & 0 &\dots & 0 & 0 & 0 \,\\
\end{pmatrix}.
\] 
In this notation, the evaluation of ${\rm pfaff}(M_B)$ at $x_i=p_i+\epsilon v_i$ 
equals $\frac{1}{s!}(\alpha+\epsilon\beta)^{(s)}$. This expression expands to a linear combination of exterior monomials of the form $\epsilon^{s-t}(\beta^{(s-t)}\wedge \alpha^{(t)})$. All these monomials are divisible by $\epsilon^{s-1}$ since $\alpha^{(t)}=0$ for $t\geq 2$.

It remains to show that ${\rm pfaff}(M_B)$ is nonzero. Using the notation from 
Section~\ref{sec: identity}, we specialize  
$\begin{pmatrix}
X_1 & \! \cdots \! & X_n \\
x_1 & \! \cdots \! & x_n\\
\end{pmatrix}$
to
$\begin{pmatrix}
1 \! & \!\cdots \!     & \!1 \! & \! 1 \!& \! 0\\
x_1 \! & \!\cdots \! & \! x_{n-2} \!&\! 0 \! &\! 1\\
\end{pmatrix}$ and we define $Y_i$ and $P_i$ similarly. 
The specialization of the matrix $M$ 
is the matrix $\mathcal{A}(x,y,p)$ from Section~\ref{sec: identity}. Hence
\[{\rm pfaff}(M_B) \, \, = \,\, F_B(x,y,p) \,\, = \,\,\,
\frac{\Psi_B(x,p)\Psi_B(y,p)}{\prod_{i<j\in B}p_{ij}} , \]
where the second equality follows from Theorem~\ref{thm: identity}. Since $y$ is generic, it suffices to show that the function $x \mapsto \Psi_B(x,p)$ is not identically zero.
  Setting $x_i=p_i^{s}$ we recognize this as a Vandermonde determinant.
  It is nonzero as the $p_i$ are distinct. \end{proof}

Via Gale duality (see~\cite{Eisenbud-Popescu} for a detailed treatment) there is a correspondence between $n$-tuples of general points in $\mathbb{P}^{n-3}$ up to the action of ${\rm PGL}_{n-2}$, and $n$-tuples of general points in $\mathbb{P}^1$ up to the action of ${\rm PGL}_{2}$. We represent the ${\rm PGL}_{b+1}$ orbit of an $n$-tuple of points in $\mathbb{P}^b$ as a $(b+1)\times n$ matrix whose columns are the homogeneous coordinates of the points. In this language, Gale duality maps the orbit of the $(n-2)\times n$ matrix with columns $Q_1,\ldots, Q_n$ to the orbit of its kernel, represented by a $2\times n$ matrix. Via this correspondence we can also think of $n$ points in $\mathbb{P}^{n-3}$ as specifying a point $p$ in the Grassmannian ${\rm Gr}(2,n)$, up to the action of the $n$-dimensional diagonal torus.

Fix a point $p = (p_{ij})$ in ${\rm Gr}(2,n)$ that is Gale dual to the given $n$-tuple $Q_1,\ldots, Q_n$
in $\mathbb{P}^{n-3}$, 
and let  $y = (y_{ij})$ be a general point of ${\rm Gr}(2,n)$. As in Section~\ref{sec: identity} 
let $\mathcal{A}(x,y,p)$ denote
the skew-symmetric $n\times n$ matrix whose off-diagonal entries are
\[
\mathcal{A}(x,y,p)_{ij} \,\,\, = \,\,\, \frac{x_{ij}y_{ij}}{p_{ij}}
\,\,\, = \,\,\,\, \begin{vmatrix}
X_i & X_j\\
x_i & x_j\\
\end{vmatrix} \cdot
\begin{vmatrix}
Y_i & Y_j\\
y_i & y_j \\
\end{vmatrix} \cdot
\begin{vmatrix}
P_i & P_j\\
p_i & p_j\\
\end{vmatrix}^{-1}.
\]
For any even subset $\,B\subset [n]\,$ let $\,F_B(x,y,p)\in k[x_1,\ldots, x_n,X_1,\ldots, X_n]\,$ be the 
pfaffian of the submatrix of $\mathcal{A}(x,y,p)$ with rows and columns indexed by $B$.  

\begin{theorem} \label{thm: pfaffgens} The Cox ring of $X_n(Q)$ is isomorphic to the subalgebra of $k[x_{ij}][T^{\pm}]$ generated by the 
elements $T^{(1-\frac{1}{2}|B|)}F_B(x,y,p)$ as $B$ runs over the even subsets of~$[n]$. 
\end{theorem}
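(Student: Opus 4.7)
My plan is to derive Theorem~\ref{thm: pfaffgens} from Lemma~\ref{lem: hypersurfaces} and the Castravet-Tevelev generation theorem recalled at the end of Section~\ref{sec:geometry}. I would first observe that the matrix $M$ of Lemma~\ref{lem: hypersurfaces} arises from $\mathcal{A}(x,y,p)$ by the substitution
\[
\begin{pmatrix} X_1 & \cdots & X_n \\ x_1 & \cdots & x_n \end{pmatrix} \longmapsto \begin{pmatrix} 1 & \cdots & 1 & 1 & 0 \\ x_1 & \cdots & x_{n-2} & 0 & 1 \end{pmatrix},
\]
together with the analogous specializations of $(Y_i:y_i)$ and $(P_i:p_i)$. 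Hence ${\rm pfaff}(M_B)$ is the image of $F_B(x,y,p)$ under this specialization, and by Lemma~\ref{lem: hypersurfaces} the resulting $2^{n-1}$ Pfaffians are nonzero sections of the $(-1)$-divisors $D(B)$ of Lemma~\ref{lem: bijection}. The Castravet-Tevelev theorem then implies that these sections generate $\Cox(X_n(Q))$ as a $k$-algebra.

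With this in hand, I would define the $k$-algebra map $\Phi:\Cox(X_n(Q)) \to k[x_{ij}][T^{\pm}]$ sending the pfaffian section in $\Gamma(D(B))$ to $T^{1-|B|/2} F_B(x,y,p)$. The exponent is forced by the identity $\deg_{E_n}(D(B)) = 1 - |B|/2$, a direct check from the two cases of Lemma~\ref{lem: bijection}: in both cases the coefficient of $E_n$ in $D(B)$ equals $1 - |B|/2$. Thus the $T$-degree in the target tracks the $E_n$-component of the $\Pic(X_n)$-grading, which makes $\Phi$ well-defined on each graded piece; surjectivity onto the claimed subalgebra is immediate from the previous paragraph.

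The remaining, and most delicate, step is the injectivity of $\Phi$. My plan is to factor $\Phi$ through the standard embedding $\Cox(X_n(Q)) \hookrightarrow k[x_1,\ldots,x_{n-2}][t_0^{\pm},\ldots,t_n^{\pm}]$ recalled in Section~\ref{sec:geometry}. Using Gale duality, the Pl\"ucker coordinates $x_{ij}$ of the generic $2\times n$ matrix together with $T$ can be written in terms of the torus variables $t_k$ and the $2\times 2$ minors formed from the affine coordinates on $\PP^{n-3}$. This produces a ring map $k[x_{ij}][T^{\pm}] \to k[x_1,\ldots,x_{n-2}][t_0^{\pm},\ldots,t_n^{\pm}]$ whose composition with $\Phi$ recovers the standard Cox embedding, which forces $\Phi$ to be injective. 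The main obstacle I anticipate is making this Gale-dual identification precise: one must verify that under the change of variables each $T^{1-|B|/2} F_B$ maps to the pfaffian section ${\rm pfaff}(M_B)$ multiplied by the appropriate monomial $t^{D(B)}$, without extraneous unit factors. This bookkeeping exercise ultimately rests on Theorem~\ref{thm: identity} (Okada's identity) to reconcile the Pfaffian and determinantal forms, since the determinantal factorization is the natural way to recognize each $F_B$ as a product of minors of the $2\times n$ Pl\"ucker matrices arising from Gale duality.
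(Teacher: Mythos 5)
Your strategy is the same one the paper uses: combine Lemma~\ref{lem: hypersurfaces} with the Castravet--Tevelev generation theorem, and reconcile the two pictures via Gale duality. There is, however, a gap in the way you set up the map $\Phi$. You declare $\Phi$ by sending the pfaffian section in $\Gamma(D(B))$ to $T^{1-|B|/2}F_B(x,y,p)$ and then argue that because the $T$-degree tracks $\deg_{E_n}D(B)$, this ``makes $\Phi$ well-defined on each graded piece.'' That is not a valid justification: prescribing values on a generating set of $\Cox(X_n(Q))$ only yields a ring homomorphism if every relation among those generators is sent to zero, and compatibility with one component of the $\Pic$-grading says nothing about the relations. (Your observation that $\deg_{E_n}D(B)=1-|B|/2$ is correct and useful for bookkeeping, but it does not carry the weight you put on it.)

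The paper avoids this difficulty altogether by never attempting to define a map out of the abstract Cox ring. Instead, it works with concrete subalgebras of explicit Laurent rings and constructs isomorphisms induced by ambient ring maps. Concretely, it parametrizes $\operatorname{Gr}(2,n)$ by the $2\times n$ matrix $C(z,t)$, obtaining an isomorphism of $k[x_{ij}]$ with the subring generated by the minors $C_{ij}(z,t)$; this carries $\mathcal{A}(x,y,p)$ to a matrix $N$ and hence $F_B(x,y,p)$ to $\operatorname{pfaff}(N_B)$. Then it checks that the subpfaffians of $N$ agree, up to nonzero scalars, with those of a $t$-twisted version $M$ of the matrix from Lemma~\ref{lem: hypersurfaces}, so that $t_n^{1-|B|/2}\operatorname{pfaff}(M_B)$ lies exactly in $\Gamma(D(B))\,t^{D(B)}\subset\Cox(X_n(Q))$. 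Both steps are isomorphisms of ambient rings restricted to subalgebras, so well-definedness and injectivity come for free. This explicit two-step reparametrization is precisely the ``main obstacle'' you correctly identify in your last paragraph but leave unresolved; filling it in is the substance of the proof, and once you do so the shaky well-definedness argument for $\Phi$ should be replaced by the subalgebra-isomorphism framing.
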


\begin{proof} 
Let  $m=t_0t_1^{-1}\cdots t_{n-1}^{-1}$. We consider the subalgebra of the (Laurent) polynomial ring
$k[z_1,\dots, z_{n-2},t_0^{\pm},t_1^{\pm},\dots, t_{n-1}^{\pm}]$ generated by the $2\times 2$ minors of the matrix 
\[ C(z,t) \,\, = \,\, \begin{pmatrix}
t_1 & t_2 & \cdots & t_{n-2} & t_{n-1} & 0 \, \\
mt_1z_1 & mt_2z_2 & \cdots & mt_{n-1}z_{n-1} & 0 & 1\, \\
\end{pmatrix}.
\] 
This algebra is isomorphic to the homogeneous coordinate ring of the Grassmannian ${\rm Gr}(2,n)$ since it is generated by $2\times 2$ minors and has the correct dimension $2n-3$. 
In particular, we can assume that the given point $p \in {\rm Gr}(2,n)$
 is specified by a matrix $C(p,t^{(p)})$ where $p=(p_1,\dots, p_{n-3})$ and $t^{(p)}=(t_0^*,\dots, t_{n-1}^*)$ have entries in $k$. 
 It represents the standard coordinate points $Q_1,\dots, Q_{n-1}$ and $Q_n=[p_1:\dots:p_{n-2}]$. 

Let $N$ denote the skew-symmetric $n \times n$-matrix whose off-diagonal entries are
 $$ N_{ij} \,\, = \,\, \frac{C_{ij}(x,t) C_{ij}(y,t^{(y)})}{C_{ij}(p,t^{(p)})} ,$$
 where $t^{(y)}$ is a vector of new variables.
It follows from our reparametrization of ${\rm Gr}(2,n)$
that the algebra generated by the pfaffians $T^{(1-\frac{1}{2}|B|)}{\rm pfaff}(N_B)$
is isomorphic to the algebra in Theorem \ref{thm: pfaffgens},
which is generated by  the elements $T^{(1-\frac{1}{2}|B|)}F_B(x,y,z)$.

Furthermore, the subpfaffians of the matrix $N$ agree, up to multiplication by a nonzero constant, with the even subpfaffians of the skewsymmetric matrix 
\[ M = 
\begin{tiny}
\begin{pmatrix}
0 \! & \!\! \frac{t^{d_{12}}(x_2-x_1)(y_2-y_1)}{(a_2-a_1)} \! & \frac{t^{d_{13}}(x_3-x_1)(y_3-y_1)}{(a_3-a_1)} & \! \cdots \! & \frac{t^{d_{1,n-2}}(x_{n-2}-x_1)(y_{n-2}-y_1)}{(a_{n-2}-a_1)} & -\frac{t^{d_{1,n-1}}x_1y_1}{a_1} & t_1\\
\vdots \! & 0 & \frac{t^{d_{23}}(x_3-x_2)(y_3-y_2)}{(a_3-a_2)} & \! \cdots \! & \frac{t^{d_{2,n-2}}(x_{n-2}-x_2)(y_{n-2}-y_2)}{(a_{n-2}-a_{2})} & -\frac{t^{d_{2,n-1}}x_2y_2}{a_2} & t_2\\
\vdots \! & \vdots & 0 & \! \cdots \! & \frac{t^{d_{3,n-2}}(x_{n-2}-x_3)(y_{n-2}-y_3)}{(a_{n-2}-a_3)} & -\frac{t^{d_{3,n-1}}x_3y_3}{a_3} &t_3\\
\vdots \! & \vdots & \vdots & \! \cdots \! & \vdots & \vdots & \vdots\\
-t_1 \! & -t_2 & -t_3 &\! \cdots \! & -t_{n-2} & -t_{n-1} & 0\\
\end{pmatrix},
\end{tiny}\] 
where we abbreviate $t^{d_{ij}}=mt_it_j$ for $i<j<n$. 
In particular, the $2^{n-1}$ subpfaffians of $M$ 
and the $2^{n-1}$ subpfaffians of $N$ define isomorphic $\mathbb{Z}^{n+1}$-graded $k$-algebras.

By Lemma~\ref{lem: hypersurfaces}, the rational function $t_n^{(1-\frac{1}{2}|B|)}{\rm pfaff}_B(M)$ is a nonzero element of the graded component $\,\Gamma(D(B))t^{D(B)}\,$
of $\, {\rm Cox}(X_n(Q))$. Here
$D(B)$ denotes the class of the $(-1)$-divisor determined by the subset $B$ as in Lemma~\ref{lem: bijection}. Since these sections generate the Cox ring of $X_n(Q)$, 
by Castravet-Tevelev \cite{CT}, the result follows. \end{proof}

\section{Even determinantal generators and phylogenetic trees}\label{sec:trees}

In this section we express the Cox ring of $X_n(Q)$ as the subalgebra generated by the determinants $\Psi_B(x,p)$ from Section~\ref{sec: identity}. These determinantal generators are a sagbi basis, and 
this yields a degeneration to certain algebras associated to trivalent (phylogenetic) trees. Let 
$p = (p_{ij})$ be Gale dual to the $n$-tuple of points $Q_1,\dots, Q_n$.

\begin{lemma} The Cox ring of $X_n(Q)$ is isomorphic to the subalgebra of $k[x_{ij}][T^{\pm}]$ generated by the expressions $\,T^{(1-\frac{1}{2}|B|)}\Psi_B(x,p)\,$ 
as $B$ ranges over even subsets of $[n]$.
\end{lemma}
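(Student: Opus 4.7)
The plan is to reduce the statement directly to Theorem~\ref{thm: pfaffgens} using Okada's identity. Theorem~\ref{thm: pfaffgens} already identifies $\mathrm{Cox}(X_n(Q))$ with the subalgebra of $k[x_{ij}][T^{\pm}]$ generated by the Pfaffian sections $T^{(1-\frac{1}{2}|B|)}F_B(x,y,p)$, for $y$ the fixed generic auxiliary parameter introduced in Section~\ref{sec: pfaffians}. So the task is to show that, for every even subset $B\subseteq[n]$, the expression $T^{(1-\frac{1}{2}|B|)}\Psi_B(x,p)$ differs from $T^{(1-\frac{1}{2}|B|)}F_B(x,y,p)$ only by a nonzero scalar in $k$.

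First I would substitute Okada's identity~\eqref{OkadaTwo}, which reads
\[
F_B(x,y,p) \,\,=\,\, \frac{\Psi_B(x,p)\,\Psi_B(y,p)}{\prod_{i<j\in B} p_{ij}}.
\]
The Gale dual parameter $p$ lies in the open torus of $\mathrm{Gr}(2,n)$ because $Q_1,\dots,Q_n$ are in linearly general position, so every $p_{ij}$ is a nonzero scalar in $k$ and the denominator is a unit. The factor $\Psi_B(y,p)$ is likewise a scalar, and I would verify its nonvanishing as follows: the Vandermonde specialization $x_i=p_i^{s}$ at the end of the proof of Lemma~\ref{lem: hypersurfaces} shows that $\Psi_B(\cdot,p)$ is not the zero polynomial, and since there are only finitely many even $B$, a generic choice of $y$ (compatible with the generic choice already made in Section~\ref{sec: pfaffians}) avoids the zero locus of every $\Psi_B(\cdot,p)$.

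Setting $\lambda_B := \Psi_B(y,p)/\prod_{i<j\in B} p_{ij}\in k^\times$, we obtain
\[
T^{(1-\frac{1}{2}|B|)} F_B(x,y,p) \,\,=\,\, \lambda_B\cdot T^{(1-\frac{1}{2}|B|)}\Psi_B(x,p),
\]
so the two families generate the same subalgebra of $k[x_{ij}][T^{\pm}]$. Combined with Theorem~\ref{thm: pfaffgens}, this proves the lemma. There is no serious obstacle: the entire argument is a direct consequence of Okada's identity, and the only mild technical point is the simultaneous nonvanishing $\Psi_B(y,p)\neq 0$ for all even $B$, which is a genericity statement already built into the choice of $y$.
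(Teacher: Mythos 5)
Your proof is correct and follows essentially the same route as the paper's: invoke Theorem~\ref{thm: pfaffgens} for the Pfaffian generators, then use Okada's identity~\eqref{OkadaTwo} to rewrite each $F_B(x,y,p)$ as a nonzero scalar multiple of $\Psi_B(x,p)$, so the two families generate the same subalgebra. The paper treats the nonvanishing of $\Psi_B(y,p)/\prod_{i<j\in B}p_{ij}$ as immediate from genericity; your brief justification via the Vandermonde specialization and finiteness of the set of even $B$ is a harmless elaboration of that same point.
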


\begin{proof} Let $(y_{ij})$ be a generic point in ${\rm Gr}(2,n)$. By Theorem~\ref{thm: identity} we have the identity
\[T^{(1-\frac{1}{2}|B|)}\cdot F_B(x,y,p) \,\,\, = \,\,\,
T^{(1-\frac{1}{2}|B|)}\cdot \frac{\Psi_B(x,p)\Psi_B(y,p)}{\prod_{i<j\in B}p_{ij}}.\]
Since $\frac{\Psi_B(y,p)}{\prod_{i<j\in B}p_{ij}}$ is a nonzero scalar in $k$, the algebra generated by the above determinants is isomorphic to the algebra generated by the pfaffians $T^{(1-\frac{1}{2}|B|)}F_B(x,y,p)$. The latter algebra is isomorphic to ${\rm Cox}(X_n(Q))$,
as was shown in Theorem~\ref{thm: pfaffgens}.
\end{proof}

\begin{remark} Our determinants $\Psi_B(x,p)$ can be thought of as the even counterparts to the 
odd-sized determinantal generators of Castravet-Tevelev in \cite[Theorem 1.1]{CT}.
\end{remark}

We now let the point $p = (p_{ij})$ range over a Zariski-open subset $U$ of ${\rm Gr}(2,n)$,
 and we consider the family of algebras over $U$ defined by the above even determinants.
   The fiber at $p \in U $ is isomorphic to the Cox ring of $X_n(Q)$,
    where $Q$ and $p$ are related by Gale duality. In particular, the isomorphism type of the algebra is constant on the orbits of the $n$-dimensional torus on ${\rm Gr}(2,n)$. 
    An important role in the degenerations we shall construct from this family
    is played by the following
{\em  bi-Pl\"ucker expansions}.

\begin{lemma}\label{lem: biplucker} Let 
$B=\{ b_1<\dots <b_{2s}\} \subset [n]$. For any sequence $i_1,\ldots, i_{s}$ 
of distinct elements of $B$, the following identify holds. Here
the sum is over the $s!$ permutations 
$\sigma$ of $B$ that satisfy $\sigma(b_m)=i_m$ for $1\leq m\leq s$,
and we abbreviate $j_m:=\sigma(b_{s+m})$:
\begin{equation}
\label{expansion}
\Psi_B(x,p)\,\, = \,\,
\sum_{\sigma} {\rm sgn}(\sigma) \prod_{r=1}^s \bigl(\,x_{i_rj_r}\prod_{m\neq r} p_{i_rj_m}\bigr)
\end{equation}
\end{lemma}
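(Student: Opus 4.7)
The natural approach is to match both sides via a Laplace expansion of $\Psi_B$ and a repackaging of the bi-Pl\"ucker sum as a compact determinant. I would first Laplace-expand $\det V_B$ along the first $s$ columns of $V_B$ --- those carrying the factor $x_{b_m}$ --- separating them from the last $s$ columns, which carry $X_{b_m}$. This expresses
\[
\Psi_B(x,p) \,\,=\!\!\sum_{I\subset B,\,|I|=s}\!\!\mathrm{sgn}(I,B\setminus I)\,\det L^I \cdot \det L'^{\,B\setminus I}.
\]
After factoring $\prod_{m\in I}x_{b_m}$ and $\prod_{m\in B\setminus I}X_{b_m}$ out of the two blocks, each $s\times s$ block becomes the generalized Vandermonde $\det(p_{b_m}^{s-a}P_{b_m}^{a-1})$, which by the classical Vandermonde formula applied to the ratios $P_{b_m}/p_{b_m}$ evaluates to $(-1)^{\binom{s}{2}}\prod_{m<m'}p_{b_m b_{m'}}$ over the appropriate index set. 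This yields a fully explicit expansion of $\Psi_B$ as a signed sum of monomials indexed by $s$-subsets $I\subset B$.

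Next, I would recast the right-hand side of (\ref{expansion}) as a single $s\times s$ determinant. Fix an ordering $\mathcal{J}_1,\ldots,\mathcal{J}_s$ of $J:=B\setminus\{i_1,\ldots,i_s\}$ and parametrize each admissible $\sigma$ by $\tau\in S_s$ via $j_m=\mathcal{J}_{\tau(m)}$. A direct manipulation (pull the uniform factor $\prod_{r,k}p_{i_r\mathcal{J}_k}$ out of the bi-Pl\"ucker sum and recognize the remainder as a permutation expansion) rewrites the right-hand side in the compact form
\[
\mathrm{sgn}(\sigma_0)\prod_{r,k=1}^{s}p_{i_r\mathcal{J}_k}\,\cdot\,\det\!\left(\tfrac{x_{i_r\mathcal{J}_k}}{p_{i_r\mathcal{J}_k}}\right)_{\!r,k=1,\ldots,s},
\]
where $\sigma_0$ is the reference permutation of $B$ with $\sigma_0(b_m)=i_m$ and $\sigma_0(b_{s+m})=\mathcal{J}_m$. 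Expanding each entry $x_{i_r\mathcal{J}_k}=X_{i_r}x_{\mathcal{J}_k}-X_{\mathcal{J}_k}x_{i_r}$ via multilinearity of the determinant produces a signed sum indexed by subsets $T\subseteq\{1,\ldots,s\}$; each term carries a monomial $\prod_{m\in I}x_{b_m}\prod_{m\in B\setminus I}X_{b_m}$ with $I=\{i_r:r\notin T\}\cup\{\mathcal{J}_r:r\in T\}$.

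Finally, for each $s$-subset $I\subset B$ I would match the coefficient of $\prod_{m\in I}x_{b_m}\prod_{m\in B\setminus I}X_{b_m}$ on the two sides. The Laplace expansion yields a single Vandermonde product, while the expanded bi-Pl\"ucker side yields a $T$-sum of such products; both coincide by the same Vandermonde identity applied in reverse, once the cofactor structure of the $s\times s$ determinant is unpacked. The main obstacle is the delicate sign bookkeeping: reconciling the Laplace sign $\mathrm{sgn}(I,B\setminus I)$, the Vandermonde sign $(-1)^{\binom{s}{2}}$ (applied once to $I$ and once to $B\setminus I$), the signs produced by the $x_{ij}$-expansion, and $\mathrm{sgn}(\sigma_0)$, so that all contributions line up term by term. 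Once this combinatorial accounting is carried out, the identity (\ref{expansion}) follows.
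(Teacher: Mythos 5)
Your plan takes a genuinely different route from the paper's. The paper obtains \eqref{expansion} by a short specialization of Okada's identity~\eqref{OkadaTwo}: set $Y_m=1$ for $m\le s$, $Y_m=0$ for $m>s$, and $y_m=1$ for all $m$, so that $y_{ij}=Y_i-Y_j$ is nonzero exactly when $i\le s<j$. In the Pfaffian expansion of $F_B(x,y,p)$ this kills every non-bipartite matching, and after clearing denominators and cancelling $\Psi_B(y,p)$ from both sides the surviving sum is precisely the right-hand side of \eqref{expansion}. Your route bypasses Okada's identity entirely and argues directly via Laplace expansion and coefficient matching; this is more self-contained but combinatorially much heavier, and as written it has a gap.

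The gap is in the step where you expand $x_{i_r\mathcal J_k}=X_{i_r}x_{\mathcal J_k}-X_{\mathcal J_k}x_{i_r}$ inside $\det\bigl(x_{i_r\mathcal J_k}/p_{i_r\mathcal J_k}\bigr)$. The resulting sum is \emph{not} indexed by subsets $T\subseteq\{1,\dots,s\}$ alone. After splitting row $r$ into its $X_{i_r}$-piece and its $x_{i_r}$-piece, the remaining determinant still has entries involving $x_{\mathcal J_k}$ or $X_{\mathcal J_k}$ depending on the row, so the column permutation $\tau$ must also be summed over; the monomial attached to the term $(T,\tau)$ corresponds to the $s$-set $I=\{i_r:r\in T\}\cup\{\mathcal J_{\tau(r)}:r\notin T\}$, which depends on $\tau$ and not just on $T$, unlike what you write. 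Consequently, for a fixed $I$ the coefficient on the bi-Pl\"ucker side is a sum over the $|T_0|!\,(s-|T_0|)!$ permutations $\tau$ carrying $\{1,\dots,s\}\setminus T_0$ onto $K_0$, where $T_0=\{r:i_r\in I\}$ and $K_0=\{k:\mathcal J_k\in I\}$. Evaluating that inner sum is a separate block Cauchy-determinant computation (the entries are of the form $1/p_{i_r\mathcal J_k}$), not ``the same Vandermonde identity applied in reverse''; until that identity is actually supplied, the coefficient matching does not close, and the difficulty is not merely sign bookkeeping.
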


\begin{proof} To simplify notation we assume $B=[2s]$ and $i_1,\dots, i_s=1,\dots, s$. We prove the 
statement by suitably specializing the identity  (\ref{OkadaTwo}) from Theorem~\ref{thm: identity}.
Let $Y_m=1$ for $1\leq m\leq s$ and $Y_m=0$ otherwise and let $y_m=1$ for $1\leq m\leq 2s$. 
Then we have 
\[ \Psi(y,p)\,\,\,= \,\,\, \bigl(\! \prod_{i<j \leq s}\! p_{ij}\,\bigr) \cdot \bigl(\prod_{s< i<j} \! p_{ij} \,\bigr).\]
The specialized $2 {\times} 2$-minors are
$y_{ij}=1$ when $1\leq i\leq s$ and $s+1\leq j\leq 2s$ and $y_{ij}=0$ otherwise. In particular, if $\mu$ is any matching of $B$ then $\prod_{(i,j)\in \mu}y_{ij}=0$ unless every edge of $\mu$ 
connects an index $i \leq s$ with an index $j \geq s+1$.

 Clearing denominators and expanding the pfaffian 
in the identity  (\ref{OkadaTwo}), we obtain
$$
\Psi(x,p)\Psi(y,p) \,\,  = \,\,
\sum_{\sigma} {\rm sgn}(\sigma) \prod_{r=1}^s \bigl(x_{i_rj_r}\prod_{m\neq r} p_{i_rj_m}\bigr)\bigl(\prod_{i<j\leq s}p_{ij}\bigr)\bigl(\prod_{s<i<j}p_{ij} \bigr).
$$
The product of the last two terms in parenthesis equals $\Psi(y,p)$, and
this can be canceled on both sides of the equation. This yields the desired identity (\ref{expansion}).
\end{proof}

Our aim now is to select one of the terms in (\ref{expansion}) as
the leading term of $\Psi_B(x,p)$. This is done by  applying
to $p$ a valuation of the field $k({\rm Gr}(2,n))$ which is trivial on $k$.
 According to Speyer-Sturmfels \cite{SpSt}, all such valuations are
classified by the points on the {\em tropical Grassmannian} $\mathbb{TG}(2,n)$.
One way to construct valuations is to fix a field homomorphism
$\omega :k({\rm Gr}(2,n)) \rightarrow k(t)$
and then compose it with the usual valuation
${\rm val}$ on rational functions.
The corresponding point on the tropical Grassmannian 
$\mathbb{TG}(2,n)$ has tropical Pl\"ucker coordinates $\,w_{ij} = {\rm val}(\omega(p_{ij}) )$.
After subtracting a large constant, and after replacing each
$w_{ij}$ by its negative $-w_{ij}$, these tropical Pl\"ucker coordinates
are precisely the distances in a {\em tree metric} on $[n]$. This was shown in
\cite[\S 4]{SpSt}.
In the following statement, we assume that the reader is familiar with
the usage of {\em phylogenetic trees} as in \cite{BW} and
tree metrics as in \cite[\S 7]{SX}.

\begin{theorem}
\label{evensagbi}
For any trivalent phylogenetic tree $\mathcal{T}$ with leaves labeled by $[n]$
there exists a point $w \in \mathbb{TG}(2,n)$ such that
the leading form of $ \Psi_B(x,p)$ for the
weights $w_{ij}$  equals
$x_{i_1 j_1}  x_{i_2 j_2} \cdots x_{i_s j_s}\,$ where 
$\, \{i_1,j_1\} \cup \{i_2,j_2\}  \cup  \cdots \cup  \{i_s,j_s\}\,$
is the unique partition  of $B$ into disjoint paths on the tree $\mathcal{T}$.
The algebra generators $ T ^{1-\frac{1}{2}|B|}\Psi_B (x,p)$ form a sagbi basis
for ${\rm Cox}(X_n(Q))$
with respect to these weights
and weight zero on $T$.
\end{theorem}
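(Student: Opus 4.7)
The plan is to combine the bi-Pl\"ucker expansion of Lemma~\ref{lem: biplucker} with a tropical valuation coming from a tree metric on $\mathcal{T}$ to identify the claimed leading monomial, and then to upgrade this to a sagbi basis statement by matching the resulting initial algebra against the phylogenetic toric algebra of Buczy\'nska--Wi\'sniewski~\cite{BW}, using the sagbi degeneration established in~\cite[\S 7]{SX}.

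Fix a tree metric $d$ whose topology is exactly $\mathcal{T}$ and with generic positive edge lengths, and set $w_{ij}:=-d(i,j)$. By~\cite[\S 4]{SpSt}, $w$ satisfies the tropical Pl\"ucker relations, hence defines a point of $\mathbb{TG}(2,n)$ in the interior of the cone parametrizing trees of topology $\mathcal{T}$. Place weight $w_{ij}$ on each $x_{ij}$ and weight $0$ on $T$. To compute the leading form of $\Psi_B(x,p)$ for $|B|=2s$, choose a transversal $I=\{i_1,\dots,i_s\}$ of the tree matching $\mu_{\mathcal{T}}(B)$, that is, a subset of $B$ meeting each of the $s$ pairs of $\mu_{\mathcal{T}}(B)$ in exactly one element. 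Applying Lemma~\ref{lem: biplucker} with this choice expresses $\Psi_B(x,p)$ as a signed sum indexed by bijections $\sigma : I \to J := B \setminus I$, each term carrying an $x$-monomial $\prod_r x_{i_r j_r}$ of weight $-\sum_r d(i_r,j_r)$ together with a scalar coefficient $\pm \prod_r \prod_{m\neq r} p_{i_r j_m}$, the latter being nonzero for $p$ in a Zariski open subset of $\Gr(2,n)$. Maximizing weight among these terms is thus equivalent to minimizing $\sum_r d(i_r,j_r)$ over bijections $I \to J$.

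The combinatorial core is the claim that $\mu_{\mathcal{T}}(B)$ uniquely minimizes $\sum_r d(i_r,j_r)$ among such bijections. This follows from the four-point condition for tree metrics: for any four leaves $a,b,c,d$ of $\mathcal{T}$, two of the three sums $d(a,b){+}d(c,d)$, $d(a,c){+}d(b,d)$, $d(a,d){+}d(b,c)$ are equal and both strictly exceed the third (since edge lengths are generic), with the minimum attained by the pairing whose two paths in $\mathcal{T}$ are edge-disjoint. An exchange argument then yields global minimality: given any non-tree matching, there exist two pairs whose paths share an edge of $\mathcal{T}$, and the four-point condition allows us to swap them for the edge-disjoint pairing on the four endpoints, strictly decreasing the total distance. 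Iterating terminates at the unique edge-disjoint bijection $I \to J$, which is $\mu_{\mathcal{T}}(B)$ by the choice of transversal. Combined with nonvanishing of the coefficient, this identifies the leading form of $\Psi_B(x,p)$ with the claimed tree-matching monomial.

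The sagbi basis claim then follows by comparing Hilbert functions. The $2^{n-1}$ leading monomials $T^{1-|B|/2}\prod_{(i,j) \in \mu_{\mathcal{T}}(B)} x_{ij}$ are, after embedding into $k[x_{ij}][T^{\pm}]$, the natural generators of the phylogenetic toric algebra attached to $\mathcal{T}$ in~\cite{BW}. By~\cite[Theorem 7.2]{SX}, ${\rm Cox}(X_n(Q))$ admits a sagbi degeneration to exactly this algebra with respect to a tree-compatible weight, so the initial and original algebras share their multigraded Hilbert functions. Since the initial forms of our $\Psi_B$ generate the phylogenetic algebra, the $T^{1-|B|/2}\Psi_B(x,p)$ form a sagbi basis. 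The main obstacle is this last identification step: after the four-point argument pins down each individual leading monomial, one must check that the full system of $2^{n-1}$ tree-matching monomials really generates the Buczy\'nska--Wi\'sniewski algebra and that our tree-compatible weight $w$ is compatible with the sagbi degeneration of~\cite[\S 7]{SX}, so as to close the Hilbert-series match.
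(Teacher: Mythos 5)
Your proof follows the same route as the paper: expand $\Psi_B$ via Lemma~\ref{lem: biplucker} along a transversal of the tree matching, argue that the tree matching uniquely minimizes total path length, and then appeal to a Hilbert-function comparison with the Buczy\'nska--Wi\'sniewski algebra of~\cite{BW} to get the sagbi claim via \cite[\S 7]{SX}. One welcome addition in your write-up is the explicit justification of the combinatorial core: you prove that the network-of-paths matching $\mu_{\mathcal T}(B)$ is the unique length-minimizer via the four-point condition plus an exchange argument, where the paper simply cites \cite[\S 3.1]{BW}. Your exchange step is correct once one reads it as an exchange over \emph{all} perfect matchings of $B$ (the intermediate matchings need not remain bijections $I\to J$, but the terminal matching $\mu_{\mathcal T}(B)$ does, by the choice of transversal). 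A cosmetic difference: you place the weight only on the Pl\"ucker variables $x_{ij}$ and treat the $p_{ij}$ as generic scalars, whereas the paper degenerates $p$ itself by a valuation with $\mathrm{val}\,p_{ij}=w_{ij}$; both bookkeeping schemes lead to minimizing $\sum_r d(i_r,j_r)$.

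The genuine gap is exactly where you flag it. You assert that the algebra generated by the leading Pl\"ucker monomials $T^{1-|B|/2}\prod_{(i,j)\in\mu_{\mathcal T}(B)}x_{ij}$ \emph{is} the Buczy\'nska--Wi\'sniewski toric algebra and invoke \cite[\S 7]{SX} to close the Hilbert-function comparison. But these two toric algebras live in different ambient tori (edge variables $z_e$ versus $T$ and the $x_{ij}$), and the identification is not immediate. The paper inserts an additional degeneration before making this comparison: it replaces each $x_{ij}$ by $x_iX_j-x_jX_i$ and takes leading terms under the diagonal monomial order, so that the generators become genuine monomials in $T, x_1,\dots,x_n,X_1,\dots,X_n$. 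It is \emph{that} monomial algebra which is identified with the binary Jukes--Cantor model of~\cite{BW}, and \cite[\S 7]{SX} gives that its multigraded Hilbert function equals that of $\mathrm{Cox}(X_n(Q))$. The inequalities
\[
\mathrm{Hilb}\bigl(\mathrm{Cox}(X_n)\bigr)\;\geq\;\mathrm{Hilb}\bigl(k[\mathrm{in}_w(\Psi_B)]\bigr)\;\geq\;\mathrm{Hilb}\bigl(k[\mathrm{in}_{\mathrm{diag}}\mathrm{in}_w(\Psi_B)]\bigr)
\]
combined with the equality of the outer terms force all three to coincide, proving the sagbi claim. Without this intermediate step the comparison you write down is not justified; to close your argument you would either need to reproduce this two-step degeneration or prove directly that the Pl\"ucker-monomial subalgebra of $k[x_{ij}][T^{\pm}]$ is isomorphic as a multigraded algebra to the Jukes--Cantor model.
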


\begin{proof}
Fix any tree metric $(-w_{ij})$ compatible with the
phylogenetic tree $\mathcal{T}$, and consider an even subset $B\subset [n]$ 
of size $2s$. There exists a unique matching $\mu=\{(i_1,b_1),\dots, (i_s, b_s)\}$
of the taxa $B$ whose connecting paths on $\mathcal{T}$
are pairwise disjoint. Note that this is the matching on $B$ whose paths have the shortest total length. 
It is referred to in \cite[\S 3.1]{BW}
as the {\em network of paths} with {\em sockets} in $B$.

Expanding $\Psi_B$ along the set $i_1,\dots, i_s$ we obtain, by Lemma~\ref{lem: biplucker}, 
the equality
\[\Psi_B(x,p)\,\,= \,\,
\sum_{\sigma} {\rm sgn}(\sigma) \prod_{r=1}^s \left(x_{i_rb_r}\prod_{m\neq r} p_{i_rb_m}\right)\]
where the sum runs over all matchings $\{(i_1,b_1),\dots, (i_s,b_s)\}$ of $B$.
The coefficient of the Pl\"ucker monomial $x_{i_1b_1} x_{i_2 b_2} \cdots x_{i_sb_s}$ in
this expansion equals
\[\prod_{r=1}^s\prod_{m\neq r}^s p_{i_rj_m}\,=\,\frac{\prod_{r=1}^s\prod_{m=1}^sp_{i_rb_m}}{\prod_{r=1}^sp_{i_rb_r}}.\]
The weight of this scalar is the weight of the numerator (which independent of the matching) 
plus the negated weight of the denominator. But  $-\sum_{r=1}^s w_{i_r b_r}$ is the total 
length of the matching, which is minimized at the 
special matching $\mu$ above.

To show that the generators $ T ^{1-\frac{1}{2}|B|}\Psi_B (x,p)$ form
a sagbi basis for the weights $w$, we degenerate them further
using the diagonal monomial order on the unknowns 
$x_1,\ldots,x_n, X_1,\ldots,X_n$. Namely, we replace the Pl\"ucker coordinate
$x_{ij}$ by $x_i X_j - x_j X_i$ and we declare $x_i X_j$ to be
the leading term. Now, the leading forms are all monomials.
It can be checked that the toric algebra generated by these
initial monomials is precisely the binary Jukes-Cantor model
on the tree $\mathcal{T}$ as studied in \cite{BW}. By \cite[\S 7]{SX}, this toric algebra
has the same multigraded Hilbert function as the Cox ring.
This shows that the two-step degeneration described above is flat.
\end{proof}

\begin{remark} The previous result is an even counterpart of the tree degenerations of the odd Castravet-Tevelev generators obtained by Sturmfels and Xu in~\cite[Theorem 7.10]{SX}. These degenerations imply that the Cox ring of $X_n(Q)$ is a Koszul algebra. 
\end{remark}

\section{Spinor varieties and their Gr\"obner bases}

In this section we review what is known about the other main players
in Theorem \ref{thm:main}, namely, the spinor variety $S^+$
and its defining ideal $I_{\rm spin}$. In particular, we present
an explicit quadratic Gr\"obner basis of $I_{\rm spin}$ due to DeConcini-Procesi \cite{DP}.

Let $V$ be a $2n$-dimensional vector space. Fix a basis $f_1,\ldots, f_n,g_1,\ldots, g_n$ for $V$ and let $W=\langle f_1,\ldots ,f_n\rangle$. We endow $V$ with the quadratic form $Q(f,g)=\sum_{i=1}^n f_ig_i$. An $m$-dimensional subspace $U\subset V$ is called {\em isotropic} if the restriction of $Q(x,y)$ to $U$ is zero. The {\em orthogonal Grassmannians} are the varieties which parametrize isotropic subspaces of $V$. When $m=n$ there are two connected components of maximal isotropic subspaces. These are parametrized by the (isomorphic) spinor varieties $S^{+}$ and $S^{-}$. The varieties $S^{+}$ and $S^{-}$ are naturally embedded in the even and odd half-spin representations of $\mathfrak{so}_{2n}$ whose underlying vector spaces are $\bigwedge^{even}W$ and $\bigwedge^{odd}W$. They can be realized as the orbit of the highest weight vectors of these representations under the action of the simple
Lie group of type $D_n$. 

In particular, with the conventions of Section~\ref{sec:geometry}, the spinor variety $S^{+}$ 
is the orbit of
the highest weight vector $f_{\emptyset}$. The spinor ideal $I_{\rm spin}$ defining $S^+$ is the ideal generated by
all homogeneous polynomials in the kernel of the $k$-algebra homomorphism 
\[ k\left[\bigwedge^{even}(W)\right] \,:= \, k \bigl[ \,f_B \,: \, B  \subset [n], \,\# \sigma \,\,{\rm even} \bigr] \, \rightarrow \,
k \bigl[z_{ij} \,: \, 1 \leq i < j \leq n \bigr] \, =: \,k[z] \]
that takes the variable $f_B$ to the Pfaffian of the
skew-symmetric matrix $(z_{ij})$ whose rows and columns are indexed by $B$. The spinor variety has dimension $\frac{1}{2}n(n-1)$.

The following {\em quadratic Grassmann-Pl\"ucker relation} is an element of the ideal $I_{\rm spin}$:
\begin{equation}
\label{wick1}
\sum_{i=1}^t (-1)^i \, f_{\tau_i \sigma_1 \sigma_2 \cdots \sigma_r} \,
                                    f_{\tau_1 \cdots \tau_{i-1} \tau_{i+1} \cdots \tau_s}
\,+\,
\sum_{j=1}^s (-1)^j \, f_{\sigma_1 \cdots \sigma_{j-1} \sigma_{j+1} \cdots \sigma_r}
\, f_{\sigma_j \tau_1 \tau_2 \cdots \tau_s}
\end{equation}
Here $\sigma$ and $\tau$ are any subsets of $[n]$ whose
cardinalities $s$ and $t$ are odd. This quadratic identity among
pfaffians is known to physicists as 
{\em Wick's Theorem}. See  \cite[Proposition 7.3.4]{Mu} and
\cite[Lemma 6.1]{DP} for combinatorial and algebraic perspectives.

For example, if $\sigma = \{1,3,4,5,6\}$ and $\tau = \{2\}$ then
the above quadric equals
\begin{equation}
\label{wick2}
\underline{
f_{3456} \,f_{12}}
+ f_{1456} \, f_{23}
- f_{1356} \, f_{24}
+ f_{1346} \, f_{25}
- f_{1345} \, f_{26}
- f_{123456} \, f.
\end{equation}

We partially order the variables $f_\sigma$ in $k[\bigwedge^{even}W]$
by setting $\, f_\sigma \succeq f_\tau\,$ whenever
$ \# \sigma \geq \# \tau$ and $\sigma_i \leq \tau_i$ for $i = 1,\ldots,\# \tau$.
This poset is the restriction of {\em Young's lattice} to 
 the even subsets of $[n]$. We consider any linear extension of
Young's lattice and we fix the {\em reverse lexicographic} term order $\succeq$
on $k[\bigwedge^{even}W]$ which is induced by the chosen total ordering of the variables.
For instance, for $n = 6$,
$$ f_{123456} \succeq f_{1234} \succeq f_{1235} \succeq \cdots
\succeq f_{3456} \succeq f_{12} \succeq f_{13} \succeq \cdots \succeq f_{56}
\succeq f_{\emptyset}.
$$
The leading term for this reverse lexicographic order is underlined in (\ref{wick2}).

\begin{thm} \label{Procesi} {\rm \cite[\S 6]{DP}}
The initial ideal of $I_{\rm spin}$ with  respect to $\succeq$
is generated by the monomials
$f_\sigma \cdot f_\tau$ corresponding to
incomparable pairs in Young's lattice.
\end{thm}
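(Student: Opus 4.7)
The plan is to prove the two inclusions $\initial_\succeq(I_{\rm spin}) \supseteq J$ and $\initial_\succeq(I_{\rm spin}) \subseteq J$ separately, where $J$ denotes the monomial ideal generated by the products $f_\sigma f_\tau$ over incomparable pairs of even subsets. For the first inclusion I would use Wick's identity (\ref{wick1}) to construct, for every incomparable pair, a quadric in $I_{\rm spin}$ whose leading monomial is $f_\sigma f_\tau$. For the reverse inclusion I would count dimensions: the standard monomials $f_{\sigma_1} \cdots f_{\sigma_m}$ along chains in Young's lattice span $k[\bigwedge^{even}W]/J$, so it suffices to show they remain $k$-linearly independent modulo $I_{\rm spin}$.

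For the first step, fix incomparable even $\sigma,\tau$ with $|\sigma| \geq |\tau|$, and let $k$ be the smallest index where $\sigma_k > \tau_k$ in the sorted sequences. I would choose odd-sized $\sigma', \tau'$ obtained from $\sigma,\tau$ by transferring a single suitable element across the violation position $k$, so that the pair $(\sigma,\tau)$ itself appears among the summands of Wick's identity (\ref{wick1}) applied to $\sigma',\tau'$. Every competing summand comes from a different single element exchange between $\sigma'$ and $\tau'$, and I would verify that each such exchange produces a pair $(\sigma'',\tau'')$ with $f_{\sigma''} f_{\tau''} \prec f_\sigma f_\tau$. The combinatorial content is that every nontrivial exchange pushes the smaller-cardinality factor up in Young's lattice (toward a variable that is smaller in the chosen linear extension), and, under the reverse-lexicographic comparison, this drop in the small factor dominates any gain in the large factor, just as in the worked example (\ref{wick2}).

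For the second step, a monomial avoids $J$ precisely when its factors form a chain in Young's lattice, so the Hilbert function of $k[\bigwedge^{even}W]/J$ in degree $d$ equals the number of multichains of length $d$ of even subsets of $[n]$. Since $J \subseteq \initial_\succeq(I_{\rm spin})$, this is already an upper bound on $\dim (k[\bigwedge^{even}W]/I_{\rm spin})_d$. To close the Hilbert-function comparison I would realize $I_{\rm spin}$ concretely through $\phi : f_B \mapsto {\rm pfaff}((z_{ij})_B)$ and pick a diagonal term order on $k[z_{ij}]$ under which $\phi(f_B)$ has leading monomial $z_{b_1 b_2} z_{b_3 b_4} \cdots z_{b_{2s-1} b_{2s}}$ for $B = \{b_1 < \cdots < b_{2s}\}$. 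For a standard monomial $\prod_j f_{\sigma_j}$ the induced leading $z$-monomial encodes the multiset of factor sets (the matching inside each $\sigma_j$ being forced), and because the chain condition uniquely orders the factors, distinct standard monomials produce distinct leading $z$-monomials. This yields the desired linear independence, hence equality of Hilbert functions, hence the reverse inclusion.

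The main obstacle is the leading-term verification in the second paragraph: one must confirm that Wick's relation, with the right choice of $\sigma',\tau'$, has $f_\sigma f_\tau$ as its unique leading term. The key is a small combinatorial lemma about single-element exchanges across the violation index $k$, isolating how such exchanges interact with the reverse-lex order built on a linear extension of Young's lattice. Once this lemma is pinned down, both containments follow immediately from the Hilbert-function comparison above.
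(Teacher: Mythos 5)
The paper does not prove this theorem itself; it cites De Concini--Procesi \cite[\S 6]{DP} and remarks that the minimal Gr\"obner basis consists of the straightening relations derived from the Wick identities (\ref{wick1}). So you are attempting a self-contained argument where the paper offers only a pointer, and your overall two-inclusion plan is the right shape. The first half, producing for each incomparable pair a Wick quadric with $f_\sigma f_\tau$ as leading term, is essentially the De Concini--Procesi strategy. Your phrasing is a little loose, though: under the reverse lexicographic order built on a linear extension of Young's lattice, the leading monomial of a degree-two polynomial is the product whose \emph{smaller} factor (in the linear extension) is \emph{largest}, so the key thing to check is that the shift across the violation index makes $f_\tau$ the maximal ``small factor'' among all terms of the chosen Wick relation. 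That is exactly what happens in the worked example (\ref{wick2}) and is what the straightening relations in \cite[Lemma 6.2]{DP} organize, but it needs to be stated carefully rather than gestured at.

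The second half has a genuine gap. You choose a term order on $k[z_{ij}]$ under which $\phi(f_B)$ has leading monomial $z_{b_1 b_2} z_{b_3 b_4} \cdots z_{b_{2s-1}b_{2s}}$, the ``consecutive pairs'' (diagonal) matching, and claim that distinct standard monomials then give distinct leading $z$-monomials. That is false. For $n=4$, both $f_{12}f_{34}$ and $f_{1234}f_\emptyset$ are standard monomials of the same multidegree (each is a chain: $f_{12}\succeq f_{34}$ and $f_{1234}\succeq f_\emptyset$), yet $\phi(f_{12}f_{34}) = z_{12}z_{34}$ while $\phi(f_{1234}f_\emptyset) = z_{12}z_{34} - z_{13}z_{24} + z_{14}z_{23}$ has leading term $z_{12}z_{34}$ under any diagonal order, so the two leading $z$-monomials coincide and the independence argument collapses. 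To rescue this route you would have to switch to the \emph{anti-diagonal} (nested) matching, e.g.\ a reverse lexicographic order on the $z_{ij}$ under which $\phi(f_{1234})$ leads with $z_{14}z_{23}$, and then prove the nontrivial combinatorial lemma that the multiset of nested matchings arising from a chain in Young's lattice determines the chain uniquely. This is a standard but genuinely substantive fact in the straightening-law literature for Pfaffians and is not supplied by your sketch. Until that is in place, the reverse inclusion $\operatorname{in}_\succeq(I_{\rm spin}) \subseteq J$ is not established.
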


To prove Theorem \ref{Procesi}, an explicit minimal 
Gr\"obner basis for $I_{\rm spin}$ is derived 
from the Grassmann-Pl\"ucker relations
(\ref{wick1}). That minimal Gr\"obner basis is not
reduced. It consists of the straightening relations in \cite[Lemma 6.2]{DP}.
For example, the quadric (\ref{wick2}) is a straightening relation,
so it is in the  minimal Gr\"obner basis. But it is not in the reduced 
Gr\"obner basis since the second term is also  in
the monomial ideal ${\rm in}(I_{\rm spin})$.
The spinor ideal $I_{\rm spin}$ is homogeneous with respect to the
$\mathbb{Z}^{n+1}$-grading 
$$ {\rm deg}(f_\sigma) \, = \, e_0 + \sum_{j \in \sigma} e_j .$$

\begin{example} \rm We here present the reduced Gr\"obner basis 
promised by Theorem \ref{Procesi} for $n = 6$.
It consists of $66 = 15 + 30 + 15 + 6$ homogeneous polynomial.
They lie in $61$ different degrees, but up to $S_6$-symmetry
there are only four classes:
\begin{small}
$$
\begin{matrix}
 \underline{f_{14} f_{23}} - f_{13} f_{24} + f_{12} f_{34} - f_{1234} f & & 
\mbox{in degree} \,\, (2,0,0,1,1,1,1) \\
 \underline{f_{1345} f_{12}} - f_{1245} f_{13} + f_{1235} f_{14} - f_{1234} f_{15} & &
\mbox{in degree} \,\, (2,0,1,1,1,1,2) \\
 \underline{f_{1236} f_{1245}} - f_{1235} f_{1246} + f_{1234} f_{1256} - f_{123456} f_{12} \!\!\!\!\!
& & \mbox{in degree} \,\, (2,1,1,1,1,2,2) 
\end{matrix}
$$
\end{small}
The central degree $(2,1,1,1,1,1,1)$ has six reduced Gr\"obner basis elements:
\begin{small}
\begin{eqnarray*} &
\!\! \underline{f_{2345} f_{16}} - f_{1345} f_{26} + f_{1245} f_{36} - f_{1235} f_{46} +
f_{1234} f_{56} - f_{123456} f, \qquad \qquad \quad \\ &
\!\! \underline{f_{2346} f_{15}} - f_{1346} f_{25} + f_{1246} f_{35} -f_{1236} f_{45} -
f_{1234} f_{56} + f_{123456} f, \qquad \qquad \quad \\ & 
\!\! \underline{f_{2356} f_{14}} - f_{1356} f_{24} + f_{1256} f_{34} + f_{1236} f_{45}-
f_{1235} f_{46} - f_{123456} f, \qquad \qquad \quad \\ &
\underline{f_{2456} f_{13}} {-} f_{1356} f_{24} {+} f_{1346} f_{25} {-} f_{1345} f_{26} {+}
f_{1236} f_{45} {-} f_{1235} f_{46} {+} f_{1234} f_{56} {-} f_{123456} f, \\ &
\underline{f_{3456} f_{12}}  {-} f_{1256} f_{34} {+} f_{1246} f_{35} {-} f_{1245} f_{36} {-}
f_{1236} f_{45} {+} f_{1235} f_{46} {-} f_{1234} f_{56} {+} f_{123456} f, \\ &
\! \underline{f_{1456} f_{23}} - f_{1356} f_{24} + f_{1346} f_{25} - f_{1345} f_{26} + 
f_{1256} f_{34} - f_{1246} f_{35} + \qquad \qquad \,\,\,\, \\ &
\qquad f_{1245} f_{36} + f_{1236} f_{45} - 
f_{1235} f_{46} + f_{1234} f_{56} \,-\, 2 \, f_{123456} \, f.
\end{eqnarray*}
\end{small}
The take-home message is that the ideal $I_{\rm spin}$ 
of the spinor variety $S^+$ is a well-understood object. It
comes equipped with an explicit quadratic Gr\"obner basis
that can be generated by combinatorial methods, even for 
considerably larger values of~$n$.
\end{example}

\section{The Cox ring inside the spinor variety}\label{sec: final}
In this section we prove Theorem \ref{thm:main}. We fix general points $Q_1,\dots, Q_n\in \mathbb{P}^{n-3}$, along with their Gale dual $p = (p_{ij})\in {\rm Gr}(2,n)$, and this 
specifies the open subset 
\[\mathcal{G}(p)\,\,:=\,\,
\bigl\{c \,\in {\rm Gr}(2,n): \Psi_{B}(c,p)\neq 0\text{ for all even $B\subset [n]$}\bigr\}.\]
We further fix one auxiliary point $y\in \mathcal{G}(p)$. By Theorem~\ref{thm: pfaffgens}, 
the homomorphism 
\begin{equation}
\label{maptocox}
k\biggl[\,\bigwedge^{even}W \biggr] \rightarrow {\rm Cox}(X_n(Q)) , 
\end{equation}
mapping $\,f_{B} \mapsto T^{1-\frac{1}{2}|B|}F_B(x,y,p)\,$ for even subsets 
$B\subset [n]$, is surjective. 

We define the Cox ideal $I_X$ of $X_n(Q)$ to be the kernel of this homomorphism. While the isomorphism type of ${\rm Cox}(X_n(Q))$ depends only on the points $Q$, the ideal $I_X$ depends on $Q$ {\bf and} on the choice of the auxiliary parameter $y$. Our first result is a higher-dimensional analogue of an embedding for universal torsors on del Pezzo surfaces in~\cite{Derenthal-Advances, SS1}.

\begin{prop}
\label{prop:epi} The ring epimorphism (\ref{maptocox}) determines an embedding of the
spectrum of the Cox ring of $X_{n}(Q)$ into the spinor variety $\,S^{+}\,$ inside 
$\,\bigwedge^{even}W \,\simeq \, k^{2^{n-1}}$.
\end{prop}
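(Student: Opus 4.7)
The plan is to show that the homomorphism (\ref{maptocox}) factors through the quotient $k[\bigwedge^{even}W]/I_{\rm spin}$, i.e.\ that $I_{\rm spin} \subseteq I_X$. Once this inclusion is established, the resulting ring surjection $k[\bigwedge^{even}W]/I_{\rm spin} \twoheadrightarrow {\rm Cox}(X_n(Q))$ dualizes to the desired closed embedding of $\Spec({\rm Cox}(X_n(Q)))$ into the spinor variety $S^{+}\subset \bigwedge^{even}W$.

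First I would invoke the universal property defining $I_{\rm spin}$: by construction it is the kernel of the Pfaffian map $\pi \colon f_B \mapsto {\rm pfaff}((z_{ij})_B)$ from $k[\bigwedge^{even}W]$ into the polynomial ring on the entries of a generic $n \times n$ skew-symmetric matrix $(z_{ij})$. The crucial point, which is essentially the content of Section~\ref{sec: pfaffians}, is that $F_B(x,y,p)$ has been engineered as the Pfaffian of the principal submatrix $\mathcal{A}_B(x,y,p)$ of the \emph{single} skew-symmetric matrix $\mathcal{A}(x,y,p)$. Consequently, the specialization $z_{ij} \mapsto \mathcal{A}(x,y,p)_{ij}$ sends ${\rm pfaff}((z_{ij})_B)$ to $F_B(x,y,p)$, so for every $g \in I_{\rm spin}$ the unscaled substitution $f_B \mapsto F_B(x,y,p)$ automatically annihilates $g$.

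Next I would reinstate the scaling factor $T^{1-|B|/2}$ actually appearing in (\ref{maptocox}). By Theorem~\ref{Procesi}, equivalently by the Wick relations (\ref{wick1}), the ideal $I_{\rm spin}$ is generated by quadrics $\sum_i c_i\, f_{B_i} f_{B_i'}$ that are bi-homogeneous in the sense that the cardinality sum $|B_i|+|B_i'|$ is constant across summands. Applying $f_B \mapsto T^{1-|B|/2} F_B$ to such a quadric therefore pulls a single power $T^{2-(|B_i|+|B_i'|)/2}$ out of every term, reducing the question to the unscaled identity $\sum_i c_i\, F_{B_i} F_{B_i'}=0$ already verified above. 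Hence the scaled substitution kills every generator of $I_{\rm spin}$, and surjectivity of the resulting map $k[\bigwedge^{even}W]/I_{\rm spin} \twoheadrightarrow {\rm Cox}(X_n(Q))$ is immediate from Theorem~\ref{thm: pfaffgens}. Taking $\Spec$ then yields the claimed closed embedding.

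The main obstacle I anticipate is this bi-homogeneity bookkeeping for the generators of $I_{\rm spin}$; the rest of the argument is a direct universal-property manipulation. The compatibility between the $T$-grading on the target and the Picard grading that places $F_B$ in $\Gamma(D(B))$ is already encoded in Lemma~\ref{lem: bijection} and Theorem~\ref{thm: pfaffgens}, so no further gradedness checks should be needed.
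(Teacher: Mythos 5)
Your proposal is correct and uses the same strategy as the paper: the key observations in both are that the $F_B(x,y,p)$ are subpfaffians of a single skew-symmetric matrix (hence the unscaled substitution $f_B\mapsto F_B$ annihilates $I_{\rm spin}$), and that the scaling $f_B\mapsto T^{1-|B|/2}f_B$ multiplies each multigraded (in particular each Wick) quadric by a single power of $T$, so the inclusion $I_{\rm spin}\subseteq I_X$ persists. The extra bookkeeping you add — invoking the universal property of $I_{\rm spin}$ as the kernel of the Pfaffian specialization, and spelling out the $|B_i|+|B_i'|$-homogeneity of the generators — is exactly what the paper compresses into its two-sentence proof, so this is the same argument made more explicit rather than a different route.
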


\begin{proof} Since the polynomials $F_B(x,y,p)$ 
are the subpfaffians of a skew-symmetric matrix, they satisfy the
Grassmann-Pl\"ucker relations (\ref{wick1}).
Moreover, the scaling of coordinates $f_B\rightarrow T^{1-\frac{1}{2}|B|}f_B$ 
multiplies each quadric (\ref{wick1}) by a power of $T$. \end{proof}

Proposition \ref{prop:epi} establishes the inclusion $I_X\supseteq I_{\rm spin}$, 
and hence the first part of Theorem \ref{thm:main}.
The explicit quadratic Gr\"obner basis of Theorem \ref{Procesi}
furnishes many relations that hold in the Cox ring.
Our next goal is to derive the much stronger relationship
between $I_X$ and $I_{\rm spin}$ expressed
in the second part of Theorem \ref{thm:main}. 
The idea is that the Cox ideal $I_X$ should be determined
by the spinor ideal $I_{\rm spin}$
     if we allow for additional parameters which account for the moduli of the varieties $X_n(Q)$. To describe this more precisely we introduce some notation. For $c\in \mathcal{G}(p)$ let $a(c)$ be the point of the diagonal torus in $\bigwedge^{even}W$ with
      $a(c)_B=\Psi_B(c,p)\cdot\Psi_B(y,p)^{-1}$, and let $\star$ be the action of this torus by componentwise multiplication.
 
\begin{prop} We have the following inclusion of ideals in $k \bigl[ \bigwedge^{even} W \bigr]$:
\begin{equation}
\label{inclusion2}
I_X \,\, \supseteq \,\, \sum_{c\in \mathcal{G}(P)} a(c)\star I_{{\rm spin}} .
\end{equation}
\end{prop}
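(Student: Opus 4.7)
My plan is to recast the problem as a statement about two ring maps agreeing on $I_{\rm spin}$. Unwinding the definition, $a(c) \star I_{\rm spin}$ is the image of $I_{\rm spin}$ under the diagonal automorphism $\phi_c$ of $k[\bigwedge^{even} W]$ sending $f_B \mapsto a(c)_B \, f_B$. Letting $\pi$ denote the epimorphism (\ref{maptocox}), the inclusion (\ref{inclusion2}) is equivalent to showing that for every $c \in \mathcal{G}(p)$ one has $I_{\rm spin} \subseteq \ker(\pi \circ \phi_c)$.

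The key computation, which I would do first, is to identify $\pi \circ \phi_c$ explicitly using Okada's identity. By Theorem~\ref{thm: identity},
\[
\pi(\phi_c(f_B)) \,\,=\,\, a(c)_B \cdot T^{1-\frac{1}{2}|B|} F_B(x,y,p)
\,\,=\,\, \frac{\Psi_B(c,p)}{\Psi_B(y,p)} \cdot T^{1-\frac{1}{2}|B|} \cdot \frac{\Psi_B(x,p)\Psi_B(y,p)}{\prod_{i<j\in B} p_{ij}},
\]
and the $\Psi_B(y,p)$ factors cancel to give $T^{1-\frac{1}{2}|B|} F_B(x,c,p)$, again by Okada applied with $c$ in place of $y$. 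In other words, the scaled map $\pi \circ \phi_c$ is precisely the map that Theorem~\ref{thm: pfaffgens} would produce had we chosen $c$ rather than $y$ as the auxiliary point.

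Once this identification is in place, the inclusion follows from the same argument as Proposition~\ref{prop:epi}. Namely, the values $F_B(x,c,p)$ are the subpfaffians of the skew-symmetric matrix $\mathcal{A}(x,c,p)$, hence they satisfy every Grassmann--Pl\"ucker relation (\ref{wick1}); these relations generate $I_{\rm spin}$ by Theorem~\ref{Procesi}. The uniform scaling by $T^{1-\frac{1}{2}|B|}$ simply multiplies each such quadric by a common power of $T$ (compatible with the $\mathbb{Z}^{n+1}$-grading on $I_{\rm spin}$ noted after Theorem~\ref{Procesi}), so these relations remain in the kernel. Summing over $c \in \mathcal{G}(p)$ yields the stated inclusion.

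The only mild subtlety I anticipate is bookkeeping the denominators: Okada's identity is an equation in a localization, and one must check that the relevant $p_{ij}$ are invertible and that the $\Psi_B(c,p)/\Psi_B(y,p)$ are well-defined nonzero scalars, which is exactly the open condition built into the definition of $\mathcal{G}(p)$ and the hypothesis $y \in \mathcal{G}(p)$. Beyond this, the proof is essentially a substitution: Okada's identity converts the abstract torus scaling $a(c)$ on the spinor coordinates into the geometric operation of replacing the auxiliary Gale-dual parameter by a new one.
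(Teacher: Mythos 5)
Your proof is correct and follows essentially the same route as the paper: both use Okada's identity (Theorem~\ref{thm: identity}) to show that $a(c)_B F_B(x,y,p) = F_B(x,c,p)$, and then conclude from the fact that the $F_B(x,c,p)$ are subpfaffians of a skew-symmetric matrix that the Grassmann--Pl\"ucker relations are sent to zero. The reformulation via the automorphism $\phi_c$ and the observation that $\pi\circ\phi_c$ is the map Theorem~\ref{thm: pfaffgens} would produce with $c$ in place of $y$ is a clean repackaging, but not a genuinely different argument.
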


\begin{proof} For any $c\in \mathcal{G}(p)$ and $B\subset [n]$ even, we have
\[a(c)_BF_B(x,y,p) \,= \, \frac{\Psi_B(c,p)}{\Psi_B(y,p)}\frac{\Psi_B(x,p)\Psi_B(y,p)}{\prod_{i<j\in B}p_{ij}}
\,= \, \frac{\Psi_B(x,p)\Psi_B(c,p)}{\prod_{i<j\in B}p_{ij}} \,= \,F_B(x,c,p) ,\]
where the first and last equality follow from Theorem~\ref{thm: identity}.
Since the $F_B(x,c,p)$ are the subpfaffians of a skew-symmetric matrix,
 it follows that $a(c)\ast I_{\rm spin}\subseteq I_X$. Since $c\in \mathcal{G}(p)$ was arbitrary,
 we conclude that $\,\sum_{c\in \mathcal{G}(P)} a(c)\star I_{{\rm spin}}$
 is contained in $I_X$. \end{proof}

We expect the above inclusion to be, in general, an equality. We now show that this is the case in several special cases. By~\cite[\S 7]{SX} the ring ${\rm Cox}(X_n(Q))$ is a Koszul algebra so $I_X$ is generated by quadrics and thus proving the equality reduces to showing that both ideals have the same number of linearly independent quadrics.

The ideals $I_X$ and $\sum a(c)\star I_{{\rm spin}}$ are homogeneous with respect to the following 
(isomorphic) multigradings of 
$k\left[\bigwedge^{even}W\right]$, which refine the grading by total degree:
\[
\begin{array}{ll}
\deg(f_B) \,= \, g_0+\sum_{b\in B} g_b & \text{ for a basis $g_0,\dots, g_n$ of $\mathbb{Z}^{n+1}$, }\\
\deg(f_B) \,\in\, {\rm Pic}(X_n(Q)) & \text{ as in Lemma~\ref{lem: bijection}. }\\
\end{array}
\]
Hence, to verify the equality in (\ref{inclusion2}), it suffices to show that both 
ideals have the same number of linearly independent quadrics in each quadratic multidegree. 

\begin{lemma}\label{lem: sections} 
Up to the action of the Weyl group $D_n$,
there are precisely $\lfloor \frac{n}{2} \rfloor+1$ quadratic multidegrees
in $k[\wedge^{even} W]$.
 A system of distinct representatives is 
 given by the degrees $\,N_s=\deg(f_{\emptyset}f_{\{1,\dots, 2s\}})$ for $0\leq 2s\leq n$.
For $s>0$, the graded component of the Cox ring of $X_n$ in multidegree $N_s$ 
is a $k$-vector space of dimension $2^{s-1}$.
\end{lemma}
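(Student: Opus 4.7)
My plan is to handle the classification of quadratic multidegrees and the dimension formula separately; the latter will be derived from the sagbi degeneration of Theorem~\ref{evensagbi}.

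For the first assertion, each quadratic monomial $f_Bf_{B'}$ has multidegree determined by the multiset $B\cup B'$, and hence by the Picard class $D(B)+D(B')$. The Weyl group of $D_n$ acts on even subsets of $[n]$ as $S_n\ltimes(\mathbb{Z}/2)^{n-1}$, where the sign-change factor consists of operations $B\mapsto B\triangle I$ with $|I|$ even. I claim that $|B\triangle B'|$ is a complete invariant for pairs of even subsets: permutations obviously preserve it, and an even sign change sends $(B,B')\mapsto(B\triangle I,B'\triangle I)$, which again has symmetric difference $B\triangle B'$. Conversely, given any pair with $|B\triangle B'|=2s$, I would first permute so that $B\triangle B'=[2s]$ and then, after an auxiliary even swap between $B\cap B'\subseteq\{2s+1,\ldots,n\}$ and $B\cap[2s]$ in case $|B\cap B'|$ is odd, apply successive even sign changes to reduce the pair to $\{\emptyset,[2s]\}$. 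Since the invariant takes values in $\{0,2,\ldots,2\lfloor n/2\rfloor\}$, this produces exactly $\lfloor n/2\rfloor+1$ orbits with representatives $N_s$.

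For the dimension formula, I would apply Theorem~\ref{evensagbi} with $\mathcal{T}$ the caterpillar tree on leaves $1<2<\cdots<n$. Here the network of paths through an even set $B=\{b_1<\cdots<b_{2t}\}$ pairs $b_{2k-1}$ with $b_{2k}$, so after the full degeneration the generator of ${\rm Cox}(X_n(Q))$ attached to $B$ becomes the monomial $T^{1-t}\prod_k x_{b_{2k-1}}X_{b_{2k}}$. By the sagbi property the multigraded Hilbert function of ${\rm Cox}(X_n(Q))$ equals that of the toric algebra, so $\dim\Gamma(N_s)$ equals the number of distinct monomials in this toric algebra of multidegree $N_s$. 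These are produced by pairs $B\sqcup B'=[2s]$, and each such monomial has the form $T^{2-s}\prod_{i\in A_-}x_i\prod_{j\in A_+}X_j$, where $(A_-,A_+)$ is the partition of $[2s]$ into the smaller and larger elements of the combined consecutive matching.

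To count the distinct $(A_-,A_+)$ I would encode each pair $B\sqcup B'=[2s]$ as a color sequence $c\in\{A,B\}^{2s}$ and process it left to right, tracking the parity state $(p_A,p_B)\in\{0,1\}^2$ of elements seen so far in each color. Position $i$ lies in $A_-$ exactly when $p_{c_i}=0$ just before step $i$, i.e., $i$ opens a new pair in its color. The only states reachable at even times are $(0,0)$ and $(1,1)$, so I would abbreviate the parity trajectory by a sequence $(e_0,\ldots,e_s)\in\{0,1\}^{s+1}$ with $e_0=e_s=0$. A direct two-step analysis shows that $(A_-(2k+1),A_-(2k+2))$ is uniquely determined by $(e_k,e_{k+1})$, via the dictionary $(0,0)\mapsto(1,0)$, $(0,1)\mapsto(1,1)$, $(1,0)\mapsto(0,0)$, $(1,1)\mapsto(0,1)$; the map $(e_0,\ldots,e_s)\mapsto A_-$ is then injective, every such sequence extends to a valid walk, and the telescoping identity $\sum_{k=0}^{s-1}(1+e_{k+1}-e_k)=s$ guarantees $|A_-|=s$. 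The free choice of $e_1,\ldots,e_{s-1}$ yields the desired count $2^{s-1}$. The principal obstacle lies in this two-step state-machine bookkeeping; once it is in place the dimension claim is immediate from the sagbi property, and in the first part the only delicate point is ensuring the reduction to the representative pair uses only \emph{even} sign changes.
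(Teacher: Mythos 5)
Your proof is correct, and it takes a genuinely different route from the paper's, which is worth comparing.

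For the classification of orbits, the paper uses the $W(D_n)$-transitivity of the action on $(-1)$-divisors to set $A=\emptyset$ and then normalizes $B$ using permutations; the distinctness of the representatives $N_s$ is only noted as a byproduct of the dimension formula cited from Sturmfels--Xu. You instead observe directly that $|B\triangle B'|$ is a well-defined invariant of the multidegree (because $D(B)+D(B')$ determines the multiset $B\uplus B'$) and that it is complete. This is cleaner and yields distinctness of the $N_s$ for free. One small simplification: the ``auxiliary even swap'' is unneeded. After permuting so that $B\triangle B'=[2s]$, the single sign change $I=B$ is already even (since $|B|$ is even), and it maps $(B,B')\mapsto(\emptyset,[2s])$ directly.

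For the dimension $2^{s-1}$, the paper simply cites \cite[Corollary~7.4]{SX}, whereas you derive it internally from Theorem~\ref{evensagbi} by choosing the caterpillar tree. Your reduction is sound: the sagbi property makes the multigraded Hilbert functions of $\operatorname{Cox}(X_n(Q))$ and the toric algebra equal, and in the toric algebra a monomial of multidegree $N_s$ comes precisely from an ordered pair of disjoint even subsets $B\sqcup B'=[2s]$, yielding $T^{2-s}\prod_{i\in A_-}x_i\prod_{j\in A_+}X_j$. Your two-step parity automaton is exactly the right bookkeeping device: the reachable states after an even number of steps are diagonal, the transition $(e_k,e_{k+1})\mapsto(A_-(2k{+}1),A_-(2k{+}2))$ is a bijection of $\{0,1\}^2$ (so the map to $A_-$ is injective on parity trajectories), every trajectory is realizable, and the telescoping sum shows $|A_-|=s$. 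With $e_0=e_s=0$ fixed this gives $2^{s-1}$ distinct $A_-$. This is a nice self-contained replacement for the external citation, assuming Theorem~\ref{evensagbi} as given.
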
 

\begin{proof}  Let $f_Af_B$ be a monomial in some quadratic multidegree. By transitivity of the action of the Weyl group  on $(-1)$-divisors we can assume that $A=\emptyset$. Moreover, a transposition $(ij)$ of two indices in $[n]$ is an element of the Weyl group. It corresponds to the action of a Cremona transformation of $\mathbb{P}^{n-3}$ centered at the points labeled by $[n] \backslash \{ij\}$. It follows that we can assume $B=\{1,\dots, 2s\}$ for some even $s$,
and the multidegrees $N_0, N_1, \ldots, N_{\lfloor \frac{n}{2} \rfloor+1}$
represent all orbits. The last statement is the content of~\cite[Corollary 7.4]{SX}.
It also shows that the $N_s$ lie in distinct $D_n$-orbits.
\end{proof}

\begin{theorem} \label{thm: generic} Suppose $n\leq 8$. For a generic $X_n(Q)$
there exists $c\in \mathcal{G}(p)$ such that
\[I_X=I_{\rm spin}+a(c)\star I_{{\rm spin}}.\] 
\end{theorem}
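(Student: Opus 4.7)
My plan is to reduce the desired ideal equality to a finite collection of linear-algebra dimension checks and then carry them out by computer algebra. Since the inclusion $\supseteq$ is already established in~(\ref{inclusion2}), only $\subseteq$ remains. Both sides are generated in degree two: $I_{\rm spin}$ by the Wick relations~(\ref{wick1}); $a(c)\star I_{\rm spin}$ by the $a(c)$-rescalings of those relations; and $I_X$ by the Koszul property of $\Cox(X_n(Q))$ from~\cite[\S 7]{SX}. It therefore suffices to produce a single $c\in\mathcal{G}(p)$ satisfying
\[
(I_{\rm spin})_N \,+\, (a(c)\star I_{\rm spin})_N \;=\; (I_X)_N
\]
in every quadratic $\Pic(X_n(Q))$-multidegree $N$.

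Next, I would invoke Weyl-group symmetry to control the number of multidegrees that must be treated explicitly. By Lemma~\ref{lem: sections}, the quadratic multidegrees split into only $\lfloor n/2\rfloor+1$ orbits under $W(D_n)$, with explicit representatives $N_0,\ldots,N_{\lfloor n/2\rfloor}$, and $\dim\Cox(X_n(Q))_{N_s}$ is known for each $s$. The Weyl group acts on $\bigwedge^{\rm even}W$ by signed permutations of the $f_B$, preserving both $I_X$ and $I_{\rm spin}$ and permuting the family $\{a(c)\star I_{\rm spin}\}_{c\in\mathcal{G}(p)}$ among itself via an induced action on $\mathcal{G}(p)$. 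Writing $U_N\subseteq\mathcal{G}(p)$ for the Zariski-open locus of $c$'s at which $\dim (I_{\rm spin}+a(c)\star I_{\rm spin})_N$ attains its maximum, this gives $U_{w(N)}=w\cdot U_N$ for every $w\in W(D_n)$. Hence non-emptiness of $U_{N_0},\ldots,U_{N_{\lfloor n/2\rfloor}}$ forces non-emptiness of every $U_N$; since $\mathcal{G}(p)$ is irreducible, the intersection of these finitely many non-empty open sets is itself dense open, so a single generic $c$ witnesses the required equality in all multidegrees at once.

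The only remaining step is to verify non-emptiness of $U_{N_s}$ for each $n\in\{5,6,7,8\}$ and each $s\in\{0,\ldots,\lfloor n/2\rfloor\}$. For a concrete choice of $(p,y,c)$ over, say, a large prime field, this is an elementary rank computation: stack the Wick quadrics in multidegree $N_s$ together with their $a(c)$-rescalings as the rows of a matrix, and check that its rank equals $\#\{\text{monomials of degree }N_s\}-\dim\Cox(X_n(Q))_{N_s}$. Each such check is straightforward in Macaulay2, reducing the whole proof to at most five rank computations per value of $n$. The main obstacle is $n=8$, where $k[\bigwedge^{\rm even}W]$ has $2^{7}=128$ variables and some of the $N_s$ contain many monomials; nonetheless, the combination of the per-multidegree decomposition and the Weyl reduction keeps every individual linear-algebra task well within reach.
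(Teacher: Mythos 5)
Your proposal follows the same overall strategy as the paper's proof: reduce the ideal equality to a comparison of dimensions in degree two (valid by Koszulity of $\Cox(X_n(Q))$ from \cite[\S 7]{SX}), invoke the openness of the condition in $(p,c)$ and the irreducibility of the parameter space to conclude that a single generic $c$ suffices, and then carry out the final check by machine in Macaulay2. In that sense the two arguments agree.

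The one genuine divergence is your Weyl-reduction step, and this is where I would push back. You claim that the $W(D_n)$-action on $\bigwedge^{\mathrm{even}}W$ by signed permutations preserves $I_X$ and permutes the ideals $\{a(c)\star I_{\mathrm{spin}}\}_{c\in\mathcal{G}(p)}$ among themselves, and deduce $U_{w(N)}=w\cdot U_N$. Neither part is automatic. There are two a priori different $W(D_n)$-actions in play: the representation-theoretic one coming from $N_{\mathrm{Spin}_{2n}}(T)/T$ acting on the half-spin space, which indeed preserves $I_{\mathrm{spin}}$, and the geometric one from \cite{CT} coming from permutations of the $Q_i$ and Cremona transformations, which preserves the Cox ring. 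That these two actions coincide (or even coincide up to a torus rescaling) on $k[\bigwedge^{\mathrm{even}}W]$ under the embedding~(\ref{maptocox}) is a nontrivial compatibility that you have not established. Likewise, the map $c\mapsto a(c)_B=\Psi_B(c,p)\Psi_B(y,p)^{-1}$ lands in a $(2n-4)$-dimensional subvariety of the $2^{n-1}$-dimensional torus, and you would need to show that the Weyl conjugation on the torus stabilizes this subvariety, i.e.\ that $w\cdot a(c)$ is again of the form $a(c')$ modulo the stabilizer of $I_{\mathrm{spin}}$. Absent these checks, the equality $U_{w(N)}=w\cdot U_N$ is unsupported.

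Fortunately, this is an optimization, not a load-bearing step. Lemma~\ref{lem: sections} already tells you $\dim\Cox(X_n(Q))_N$ for every quadratic multidegree $N$ (by transporting the dimension from the orbit representative $N_s$, using only the known $W$-action on the Cox ring). The inclusion~(\ref{inclusion2}) gives $\dim\bigl(k[\bigwedge^{\mathrm{even}}W]/(I_{\mathrm{spin}}+a(c)\star I_{\mathrm{spin}})\bigr)_N \geq \dim\Cox(X_n(Q))_N$ in every $N$, so it suffices to compare the \emph{total} degree-two Hilbert function of the quotient with that of the Cox ring; equality of the totals forces equality multidegree by multidegree. This is what the paper's Macaulay2 computation does, and it sidesteps any need to reason about Weyl symmetry of the family $\{a(c)\star I_{\mathrm{spin}}\}$. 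I would recommend you either supply the missing compatibility argument for the Weyl reduction, or simply drop it and perform the single aggregate degree-two rank check per $n$.
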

\begin{proof} It suffices to show that in all quadratic multidegrees the dimension of the quotient of $k[\bigwedge^{even}W]$ modulo the ideal $I_{\rm spin}+a(c)\star I_{{\rm spin}}$ is at most the one specified in Lemma~\ref{lem: sections}. Since this is an open condition, it suffices to verify this
claim for one choice of point $p = (p_{ij}) \in {\rm Gr}(2,n)$. We verify this by direct computation using the computer program Macaulay2 of Grayson and Stillman~\cite{M2}. The code and its output
are posted at our website \url{www.math.berkeley.edu/~velasco/StVe.html}.
\end{proof}


\end{document}